\documentclass{gen-j-l}%
\usepackage{amssymb}
\usepackage{amsfonts}
\usepackage{amsmath}
\usepackage{graphicx}%
\setcounter{MaxMatrixCols}{30}
\newtheorem{theorem}{Theorem}[section]
\newtheorem{lemma}[theorem]{Lemma}
\theoremstyle{definition}
\newtheorem{definition}[theorem]{Definition}
\newtheorem{example}[theorem]{Example}

\theoremstyle{remark}
\newtheorem{remark}[theorem]{Remark}
\numberwithin{equation}{section}
\theoremstyle{plain}

\newtheorem{corollary}{Corollary}

\copyrightinfo{2001}{enter name of copyright holder}
\begin{document}
\title{On the global behavior of linear flows}
\author{Fritz Colonius}
\address{Institut f\"{u}r Mathematik, Universit\"{a}t Augsburg, Germany\\
fritz.colonius@uni-a.de}
\subjclass{37B55, 93C15, 37B20}
\keywords{linear flow, chain transitivity, Poincar\'{e} sphere, Morse spectrum}

\begin{abstract}
For linear flows on vector bundles, it is analyzed when subbundles in the
Selgrade decomposition yield chain transitive subsets for the induced flow on
the associated Poincar\'{e} sphere bundle.

\end{abstract}
\maketitle

\section{Introduction}

For linear flows on vector bundles, Selgrade's theorem describes the
decomposition into subbundles obtained from the chain recurrent components of
the induced flow on the projective bundle. This coincides with the finest
decomposition into exponentially separated subbundles but has the advantage
that it provides an intrinsic characterization using chain transitivity
properties in the projective bundle. The present paper complements this point
of view by considering recurrence properties of the linear flow. This is based
on the compactification provided by the\ construction of the Poincar\'{e}
sphere from the global theory of ordinary differential equations going back to
Poincar\'{e} \cite{Poin}; cf., e.g., Perko \cite[Section 3.10]{Perko}.

We will consider linear flows on a vector bundle $\pi:\mathcal{V}\rightarrow
B$ over a compact chain transitive metric space $B,$ that is, a continuous
flow $\Phi$ on $\mathcal{V}$ preserving fibers such that the induced maps on
the fibers $\mathcal{V}_{b}$ are linear. Classical examples of such flows are
given by linear differential equations with almost periodic coefficients and
by linearized flows over a compact invariant chain transitive set. We refer to
Selgrade \cite{Selg75}, Salamon and Zehnder \cite{SalZ88}, Bronstein and
Kopanskii \cite{BroK94}, Johnson, Palmer and Sell \cite{JoPS87}, and Colonius
and Kliemann \cite[Chapter 5]{ColK00} for the theory of linear flows, and to
Alves and San Martin \cite{AlvSM16} for generalizations to principal bundles.
Blumenthal and Latushkin \cite{BluL19} generalize Selgrade's theorem to linear
skew product semiflows on separable Banach bundles.

Selgrade's theorem (\cite[Theorem 5.2.5]{ColK00}) states that the induced flow
$\mathbb{P}\Phi$ on the projective bundle $\mathbb{P}\mathcal{V}$ has finitely
many chain recurrent components $_{\mathbb{P}}M_{1},\ldots,\,_{\mathbb{P}%
}M_{l}$ (this coincides with the finest Morse decomposition) and $1\leq l\leq
d:=\dim\,\mathcal{V}_{b},\,b\in B.$ Every chain recurrent component
$_{\mathbb{P}}M_{i}$ defines an invariant subbundle $\mathcal{V}%
_{i}=\mathbb{P}^{-1}\left(  _{\mathbb{P}}M_{i}\right)  $ of $\mathcal{V}$ and
the following decomposition into a Whitney sum holds:
\begin{equation}
\mathcal{V}=\mathcal{V}_{1}\oplus\cdots\oplus\mathcal{V}_{l}.
\label{Morsedec1}%
\end{equation}
Note that this refines the subbundle decomposition obtained by exponential
dichotomies. It is clear that in \textquotedblleft stable\textquotedblright%
\ and \textquotedblleft unstable\textquotedblright\ subbundles no recurrence
properties can be expected. This is different in the nonhyperbolic case, where
a \textquotedblleft central\textquotedblright\ subbundle is present: If for an
autonomous ordinary differential equation $\dot{x}=Ax$ the matrix
$A\in\mathbb{R}^{d\times d}$ has $0$ as the only eigenvalue with vanishing
real part and $0$ has geometric multiplicity $1$, then the eigenspace consists
of equilibria, hence it is chain transitive. If, e.g., $0$ has higher
geometric multiplicity the behavior \textquotedblleft at
infinity\textquotedblright\ has to be taken into account. This can be done
using the Poincar\'{e} sphere which is obtained by attaching to the sphere
$\mathbb{S}^{d}$ in $\mathbb{R}^{d+1}$ a copy of $\mathbb{R}^{d}$ at the north
pole and by taking the central projection from the origin in $\mathbb{R}%
^{d+1}$ to the northern hemisphere $\mathbb{S}^{d,+}$ of $\mathbb{S}^{d}$.

Based on this construction, the main results of this paper are Theorem
\ref{Theorem_Poincare1} and Theorem \ref{Theorem_Poincare2} showing when for
linear flows subbundles $\mathcal{V}_{i}$ yield chain transitive sets on the
appropriately defined Poincar\'{e} sphere bundle. A sufficient condition is
based on a convenient spectral concept, the Morse spectrum, which is
constructed via chains in the chain recurrent components on the projective
bundle $\mathbb{P}\mathcal{V}$. Here we instead consider chain recurrent
components on the sphere bundle $\mathbb{S}\mathcal{V}$ and an analogously
defined Morse spectrum.

The contents of this paper are as follows. Section \ref{Section2} recalls some
notation for linear flows on vector bundles and analyzes the relation between
the chain recurrent components for the induced flows on the projective bundle
and the sphere bundle. Section \ref{Section3} recalls the Morse spectrum and
Section \ref{Section4} shows the main results on the projected flow on the
Poincar\'{e} sphere bundle. In Section \ref{Section5} the results are
illustrated for linear autonomous differential equations and for bilinear
control systems (cf. Elliott \cite{Elliott}, Jurdjevic \cite{Jurd97}).

\section{Linear flows on vector bundles\label{Section2}}

This section introduces some notation and describes for\ (continuous) linear
flows on a vector bundle $\pi:\mathcal{V}\rightarrow B$ over a metric space
$B$ the relation between the chain recurrent components on the projective
bundle and the sphere bundle.

We consider vector bundles of the following form (for the following cf.
Salamon and Zehnder \cite[Appendix]{SalZ88}).

\begin{definition}
\label{Definition1}Let $H$ be a finite-dimensional Hilbert space over the
reals $\mathbb{R}$ and let $B$ be a compact metric space. A vector bundle is a
continuous map $\pi:\mathcal{V}\rightarrow B$ where $\mathcal{V}$ is a
topological space with the following properties:\smallskip

(i) There exist a finite open cover $\left\{  U_{a},\;a\in A\right\}  $ and
homeomorphisms $\varphi_{a}:\pi^{-1}(U_{a})\allowbreak\rightarrow U_{a}\times
H$ such that $pr_{1}\circ\varphi_{a}=\pi$, where $pr_{1}:U_{a}\times
H\rightarrow U_{a}$ denotes the projection onto the first component.

(ii) The maps $L_{ba}:H\rightarrow H$ defined by $\varphi_{b}\circ\varphi
_{a}^{-1}(b,h)=(b,L_{ba}h)$ are linear for every $b\in U_{a}\cap U_{b}$.
\end{definition}

Without loss of generality one may assume that $\varphi_{a}$ extends to a
homeomorphism of $\overline{\pi^{-1}(U_{a})}$ onto $\overline{U_{a}}\times H$.
The fibers $\mathcal{V}_{b}$ can be given a Hilbert space structure by%
\begin{equation}
c_{1}v_{1}+c_{2}v_{2}=\varphi_{a}^{-1}(b,c_{1}x_{1}+c_{2}x_{2})\text{ for
}c_{i}\in\mathbb{R},\,v_{i}=\varphi_{a}^{-1}(b,x_{i})\in\mathcal{V}_{b},i=1,2,
\label{linear}%
\end{equation}
and using the continuous and positive definite linear form on $\mathcal{V}$
defined by%
\begin{equation}
\left\langle v,v^{\prime}\right\rangle =%
{\displaystyle\sum\limits_{a\in A}}
d(\pi(v),B\setminus U_{a})\left\langle v,v^{\prime}\right\rangle _{a}\text{
for }\pi(v)=\pi(v^{\prime}). \label{form}%
\end{equation}
For $v=\varphi_{a}^{-1}(b,x)$ and $v^{\prime}=\varphi_{a}^{-1}(b^{\prime
},x^{\prime})$ with $b,b^{\prime}\in U_{a}$ let
\begin{equation}
d_{a}(v,v^{\prime})=\max\left\{  d(b,b^{\prime}),\parallel x-x^{\prime
}\parallel_{a}\right\}  . \label{d_a}%
\end{equation}
For any sequence $v_{0},...,v_{m}\in\mathcal{V}$ we define
\[
\rho(v_{0},...,v_{m})=\max\left\{  \sum_{j=1}^{m}d_{a_{j}}(v_{j-1}%
,v_{j})\left\vert \pi(v_{j-1}),\pi(v_{j})\in U_{a_{j}}\text{ with }a_{j}\in
A\right.  \right\}  ,
\]
where the maximum over the empty set is by definition $+\infty$. Then the
distance function
\begin{equation}
d(v,v^{\prime})=\inf\left\{  \rho(v_{0},...,v_{m})\left\vert \,m\in
\mathbb{N},\,v_{j}\in\mathcal{V},\;v_{0}=v,\;v_{m}=v^{\prime}\right.
\right\}  \label{d}%
\end{equation}
defines a metric on $\mathcal{V}$ that is compatible with the original
topology. The corresponding projective bundle is $\mathbb{P}\mathcal{V}%
=(\mathcal{V}\setminus Z)/\sim$, where $Z$ is the zero section and
\thinspace$v\sim v^{\prime}$ if $\pi(v)=\pi(v^{\prime})$. This is a fiber
bundle $\mathbb{P}\pi:\mathbb{P}\mathcal{V}\rightarrow B$ with local
trivializations $\mathbb{P}\varphi_{a}:\mathbb{P}\pi^{-1}(U_{a})\rightarrow
U_{a}\times\mathbb{P}H$. The (unit) sphere bundle is $\mathbb{S}\pi
:\mathbb{S}\mathcal{V}:=\{v\in\mathcal{V}\left\vert \left\Vert v\right\Vert
=1\right.  \}\rightarrow B$. The metric spaces $\mathbb{P}\mathcal{V}$ and
$\mathbb{S}\mathcal{V}$ are compact.

\begin{definition}
\label{continuouslinflow}A \textit{linear flow} $\Phi$ on a vector bundle
$\pi:\mathcal{V}\rightarrow B$ is a flow $\Phi$ on $\mathcal{V}$ such that for
all $\alpha\in\mathbb{R}\,$ and $v,w\in\mathcal{V}$ with $\pi(v)=\pi(w)$ and
$t\in\mathbb{R}$ one has
\[
\pi\left(  \Phi(t,\,v)\right)  =\pi\left(  \Phi(t,\,w)\right)  ,\,\Phi
(t,\alpha(v+w))=\alpha\Phi(t,v)+\alpha\Phi(t,\,w)\,.\,
\]

\end{definition}

It is convenient to denote the induced flow on the base space $B$ by $b\cdot
t,t\in\mathbb{R}$. The restriction of $\Phi(t,\cdot)$ to a fiber
$\mathcal{V}_{b}$ is a linear map to $\mathcal{V}_{b\cdot t}$ denoted by
$\Phi_{b,t}$ with $\left\Vert \Phi_{b\cdot t}(v)\right\Vert =\left\Vert
\Phi(t,v)\right\Vert $ for $v\in\mathcal{V}_{b}$. This is an isomorphism with
inverse $\Phi_{b\cdot t,-t}$. Furthermore, $\Phi$ induces a flow
$\mathbb{S}\Phi$ on the sphere bundle satisfying $\mathbb{S}\Phi
(t,v)=\frac{\Phi(t,v)}{\left\Vert \Phi(t,v)\right\Vert }$ for all $t,v$ and a
flow on the projective bundle denoted by $\mathbb{P}\Phi$.

For $\varepsilon,\,T>0$ an ($\varepsilon,T)$-chain $\zeta$ for a flow $\Psi$
on a metric space $X$ from $x$ to $y$ is given by $n\in\mathbb{N}\mathbf{,}$
$T_{0},\ldots,T_{n-1}\geq T$, and $x_{0}=x,\ldots,x_{n}=y\in X$ with
$d(\Psi(T_{i},x_{i}),x_{i+1})<\varepsilon$ for $i=0,\ldots,n-1$. For $x\in X$
the (forward) chain limit set is%
\[
\Omega(x)=\{y\in X\left\vert \forall\varepsilon,T>0\,\exists(\varepsilon
,T)\text{ chain from }x\text{ to }y\right.  \}.
\]
A set is called chain transitive if $y\in\Omega(x)$ for all $x,y$ in this set.
A chain recurrent component $M$ is a maximal chain transitive set (on a
compact metric space this are the connected components of the chain recurrent
set). The monograph Alongi and Nelson \cite{AlonN07} provides detailed proofs
around chains; cf. also Robinson \cite{Robin98}.

Next we analyze the relations between the chain recurrent components for the
induced flows $\mathbb{P}\Phi$ on the projective bundle $\mathbb{P}%
\mathcal{V}$ and $\mathbb{S}\Phi$ on the sphere\ bundle $\mathbb{S}%
\mathcal{V}$.

\begin{lemma}
\label{Lemma_sphere}(i) Let $v,w\in\mathbb{S}\mathcal{V}$. If on
$\mathbb{P}\mathcal{V}$ the point $\mathbb{P}w$ is in $\Omega(\mathbb{P}v)$,
then on $\mathbb{S}\mathcal{V}$ at least one of the points $w$ or $-w$ is in
$\Omega(v)$.

(ii) Let $_{\mathbb{S}}M$ be a chain recurrent component on $\mathbb{S}%
\mathcal{V}$. Then the projection of $_{\mathbb{S}}M$ to $\mathbb{P}%
\mathcal{V}$ is contained in a chain recurrent component $_{\mathbb{P}}M$.

(iii) Consider a chain recurrent component $_{\mathbb{P}}M$ on $\mathbb{P}%
\mathcal{V}$. Suppose that there is $v_{0}\in\mathbb{S}\mathcal{V}$ such that
$\mathbb{P}v_{0}\in\,_{\mathbb{P}}M$ and $-v_{0}\in\Omega(v_{0})$. Then
$\{v\in\mathbb{S}\mathcal{V}\left\vert \mathbb{P}v\in\,_{\mathbb{P}}M\right.
\}$ is a chain recurrent component on $\mathbb{S}\mathcal{V}$.
\end{lemma}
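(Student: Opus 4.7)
The plan is to set $_{\mathbb{S}}N := \{v \in \mathbb{S}\mathcal{V} \mid \mathbb{P}v \in\,_{\mathbb{P}}M\}$, verify that $_{\mathbb{S}}N$ is chain transitive, then deduce maximality from part (ii). The workhorse throughout will be the \emph{antipodal symmetry} $\sigma : v \mapsto -v$: because the fiberwise norms are invariant under negation and $\pi\circ\sigma=\pi$, formula (\ref{d}) shows that $\sigma$ is an isometry of $\mathbb{S}\mathcal{V}$; and by linearity of the fibers, $\mathbb{S}\Phi(t,-v) = -\mathbb{S}\Phi(t,v)$, so $\sigma$ commutes with $\mathbb{S}\Phi$. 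Consequently, negating every node of an $(\varepsilon,T)$-chain produces another $(\varepsilon,T)$-chain, so $y\in\Omega(x)$ if and only if $-y\in\Omega(-x)$ on $\mathbb{S}\mathcal{V}$. In particular the hypothesis $-v_0\in\Omega(v_0)$ also yields $v_0\in\Omega(-v_0)$.

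For chain transitivity, fix arbitrary $v,w\in\,_{\mathbb{S}}N$, so that $\mathbb{P}v,\mathbb{P}w\in\,_{\mathbb{P}}M$. Chain transitivity of $_{\mathbb{P}}M$ gives $\mathbb{P}v_0\in\Omega(\mathbb{P}v)$ and $\mathbb{P}w\in\Omega(\mathbb{P}v_0)$, so part (i) provides at least one of $\pm v_0$ in $\Omega(v)$ and at least one of $\pm w$ in $\Omega(v_0)$. I would upgrade the first statement to ``both $v_0$ and $-v_0$ lie in $\Omega(v)$'': if (i) produces $v_0$ then $-v_0\in\Omega(v_0)\subseteq\Omega(v)$ by concatenation; if (i) produces $-v_0$ then $v_0\in\Omega(-v_0)\subseteq\Omega(v)$ by the first paragraph. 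Finally, if $w\in\Omega(v_0)$ directly, concatenate $v\leadsto v_0\leadsto w$; otherwise only $-w\in\Omega(v_0)$ is available, and antipodal symmetry converts this into $w\in\Omega(-v_0)$, so concatenate $v\leadsto-v_0\leadsto w$. Either way $w\in\Omega(v)$, proving chain transitivity.

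For maximality, $_{\mathbb{S}}N$ is closed (as the continuous preimage of $_{\mathbb{P}}M$) and $\mathbb{S}\Phi$-invariant, hence contained in a chain recurrent component $_{\mathbb{S}}M'$ of $\mathbb{S}\mathcal{V}$. By part (ii), $\mathbb{P}(_{\mathbb{S}}M')\subseteq\,_{\mathbb{P}}M''$ for some chain recurrent component $_{\mathbb{P}}M''$; but $\mathbb{P}(_{\mathbb{S}}N) =\,_{\mathbb{P}}M$ forces $_{\mathbb{P}}M\subseteq\,_{\mathbb{P}}M''$, and maximality of $_{\mathbb{P}}M$ gives $_{\mathbb{P}}M'' =\,_{\mathbb{P}}M$. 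Thus $_{\mathbb{S}}M'$ projects into $_{\mathbb{P}}M$, i.e.\ $_{\mathbb{S}}M'\subseteq\,_{\mathbb{S}}N$, giving equality. The main obstacle is the ``sheet routing'' step of the chain-transitivity argument: part (i) only guarantees reaching $w$ or its antipode on $\mathbb{S}\mathcal{V}$, and without a mechanism to flip sheets there would in general be no way to force the target $w$ rather than $-w$. The single flipping chain $v_0\leadsto-v_0$ supplied by the hypothesis suffices precisely because the linearity-driven antipodal symmetry propagates it globally to flipping chains between every pair of chain-equivalent points, trivializing the $\mathbb{Z}/2$ obstruction of the double cover $\mathbb{S}\mathcal{V}\to\mathbb{P}\mathcal{V}$.
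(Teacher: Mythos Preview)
Your argument for part (iii) is correct and follows the same strategy as the paper's: lift chains from $\mathbb{P}\mathcal{V}$ via part (i) and use the antipodal symmetry together with the hypothesis $-v_0\in\Omega(v_0)$ to resolve the resulting sign ambiguity. Your routing through $v_0$ (first establishing that both $\pm v_0$ lie in $\Omega(v)$, then concatenating $v\leadsto\pm v_0\leadsto w$) is a touch cleaner than the paper's direct case analysis from $v_1$ to $v_2$, and you make the maximality step via part (ii) explicit, which the paper leaves tacit.
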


\begin{proof}
We will frequently use the following elementary fact: If $w\in\Omega(v)$ for
the induced system on $\mathbb{S}\mathcal{V}$, then $-w\in\Omega(-v)$.

(i) For $\varepsilon,T>0$, let an $(\varepsilon,T)$-chain given by
$\mathbb{P}v_{0}=\mathbb{P}v,\mathbb{P}v_{1},\ldots,\mathbb{P}v_{n}%
=\mathbb{P}w$ and $T_{0},\ldots,\allowbreak T_{n-1}\geq T$ with $d(\mathbb{P}%
\Phi(T_{i},v_{i}),\mathbb{P}v_{i+1})<\varepsilon$. In order to construct an
$(\varepsilon,T)$-chain on $\mathbb{S}\mathcal{V}$ note that $d(\mathbb{S}%
\Phi(T_{0},v_{0}),v_{1})<\varepsilon$ or $d(\mathbb{S}\Phi(T_{0},v_{0}%
),-v_{1})<\varepsilon$. In the first case define $v_{1}^{1}=v_{1}$ and in the
second case define $v_{1}^{1}=-v_{1}$. Proceeding in this way, one finds an
$(\varepsilon,T)$-chain $v_{0},v_{1}^{1},\ldots,v_{n}^{1}$ from $v$ to $w$ or
$-w$.

Assertion (ii) is immediate from the definitions. Concerning assertion (iii)
let $v_{1},v_{2}\in\{v\in\mathbb{S}\mathcal{V}\left\vert \mathbb{P}%
v\in\,_{\mathbb{P}}M\right.  \}$. We have to show that $v_{2}\in\Omega(v_{1}%
)$. Since $\mathbb{P}v_{1},\mathbb{P}v_{2}\in\,_{\mathbb{P}}M$ it follows that
$v_{2}$ or $-v_{2}$ is in $\Omega(v_{1})$. In the first case we are done. In
the second case it follows that $v_{2}\in\Omega(-v_{1})$ and, by our
assumption, $v_{0}\in\Omega(-v_{0})$. Now $\mathbb{P}(-v_{1})=\mathbb{P}%
v_{1}\in\,_{\mathbb{P}}M$ and $\mathbb{P}v_{0}\in\,_{\mathbb{P}}M$ imply that
$\mathbb{P}v_{0}\in\Omega(\mathbb{P}(-v_{1}))$, hence \thinspace(a) $v_{0}%
\in\Omega(-v_{1})$ or (b) $-v_{0}\in\Omega(-v_{1})$. We claim also (b) implies
that $v_{0}\in\Omega(-v_{1})$. In fact, $-v_{0}\in\Omega(-v_{1})$ implies
$v_{0}\in\Omega(-v_{0})\subset\Omega(-v_{1})$ showing the claim.

Since $\mathbb{P}v_{0},\mathbb{P}v_{2}\in\,_{\mathbb{P}}M$ it follows that
$v_{2}\in\Omega(v_{0})$ or $-v_{2}$ is in $\Omega(v_{0})$. In the first case,
one has%
\[
v_{2}\in\Omega(v_{0})\subset\Omega(-v_{0})\subset\Omega(v_{1})
\]
and in the second case $v_{2}\in\Omega(-v_{0})\subset\Omega(v_{1})$.
\end{proof}

We get the following result characterizing the relation between the chain
recurrent components $_{\mathbb{P}}M_{1},\ldots,\,_{\mathbb{P}}M_{l},1\leq
l\leq d$, on the projective bundle and the chain recurrent components on the
sphere bundle.

\begin{theorem}
\label{Theorem_sphere}(i) The set $\{v\in\mathbb{S}\mathcal{V}\left\vert
\mathbb{P}v\in\,_{\mathbb{P}}M_{i}\right.  \}$ is the unique chain recurrent
component on $\mathbb{S}\mathcal{V}$ which projects to $_{\mathbb{P}}M_{i}$ if
and only if there is $v_{0}\in\mathbb{S}\mathcal{V}$ with $\mathbb{P}v_{0}%
\in\,_{\mathbb{P}}M_{i}$ such that $-v_{0}\in\Omega(v_{0})$ for the system on
the sphere bundle $\mathbb{S}\mathcal{V}$.

(ii) For every chain recurrent component $_{\mathbb{P}}M_{i}$ there are at
most two chain recurrent components $_{\mathbb{S}}M^{+}$ and $_{\mathbb{S}%
}M^{-}$ on $\mathbb{S}\mathcal{V}$ such that%
\begin{equation}
\{v\in\mathbb{S}\mathcal{V}\left\vert \mathbb{P}v\in\,_{\mathbb{P}}%
M_{i}\right.  \}=\,_{\mathbb{S}}M^{+}\cup\,_{\mathbb{S}}M^{-}\text{, and then
}_{\mathbb{S}}M^{+}=-\,_{\mathbb{S}}M^{-}. \label{sphere_1}%
\end{equation}

(iii) There are $l_{1}$ chain recurrent components on $\mathbb{S}\mathcal{V}$
denoted by $_{\mathbb{S}}M_{1},\ldots,\allowbreak\,_{\mathbb{S}}M_{l_{1}}$
with $1\leq l_{1}\leq2l\leq2d$.
\end{theorem}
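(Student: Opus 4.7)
The plan is to prove part (ii) first and then read off (i) and (iii). Fix a chain recurrent component ${}_{\mathbb{P}}M_i$ on $\mathbb{P}\mathcal{V}$ and set
$P:=\{v\in\mathbb{S}\mathcal{V}\mid \mathbb{P}v\in{}_{\mathbb{P}}M_i\}$. As a preliminary I would observe that $P$ is a union of chain recurrent components of $\mathbb{S}\Phi$: projecting any $(\varepsilon,T)$-chain on $\mathbb{S}\mathcal{V}$ gives an $(\varepsilon,T)$-chain on $\mathbb{P}\mathcal{V}$, so every point chain equivalent to some $v\in P$ lies above a point chain equivalent to $\mathbb{P}v\in{}_{\mathbb{P}}M_i$, hence again in $P$.

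Next I would pick any $v_0\in P$, let ${}_{\mathbb{S}}M^+$ be its chain recurrent component on $\mathbb{S}\mathcal{V}$, and set ${}_{\mathbb{S}}M^-:=-{}_{\mathbb{S}}M^+$. Because $\Phi$ is linear, the antipodal map $v\mapsto -v$ commutes with $\mathbb{S}\Phi$, so ${}_{\mathbb{S}}M^-$ is also a chain recurrent component, and it is contained in $P$ since $\mathbb{P}(-v)=\mathbb{P}v$. To prove (\ref{sphere_1}) it then suffices to establish $P\subseteq {}_{\mathbb{S}}M^+\cup {}_{\mathbb{S}}M^-$. Given $w\in P$, both $\mathbb{P}w\in\Omega(\mathbb{P}v_0)$ and $\mathbb{P}v_0\in\Omega(\mathbb{P}w)$ hold in ${}_{\mathbb{P}}M_i$, so Lemma \ref{Lemma_sphere}(i) gives $w\in\Omega(v_0)$ or $-w\in\Omega(v_0)$, and $v_0\in\Omega(w)$ or $-v_0\in\Omega(w)$. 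I would then run through the four resulting cases: the two "pure" cases immediately place $w$ in ${}_{\mathbb{S}}M^+$ respectively ${}_{\mathbb{S}}M^-$ (using the elementary symmetry $x\in\Omega(y)\Leftrightarrow -x\in\Omega(-y)$ already noted in the proof of Lemma \ref{Lemma_sphere}); each "mixed" case forces, by concatenation of chains, the relation $-v_0\in\Omega(v_0)$, hence by Lemma \ref{Lemma_sphere}(iii) the full preimage $P$ is one component and $w$ lies in it. This case analysis, though combinatorially small, is the main obstacle, since one must keep careful track of which chain concatenations require the forced symmetry ${}_{\mathbb{S}}M^+={}_{\mathbb{S}}M^-$ before they go through.

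With (ii) established, (i) is essentially bookkeeping. The "if" direction is Lemma \ref{Lemma_sphere}(iii). Conversely, if $P$ is a single chain recurrent component, then for any $v_0\in P$ the point $-v_0$ also lies in $P$ and is therefore chain equivalent to $v_0$, so in particular $-v_0\in\Omega(v_0)$. Uniqueness of a component projecting onto ${}_{\mathbb{P}}M_i$ under this hypothesis is immediate since by Lemma \ref{Lemma_sphere}(ii) any such component must be a subset of $P$.

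Finally, for (iii) I would combine part (ii) with Selgrade's theorem and Lemma \ref{Lemma_sphere}(ii): the latter guarantees that every chain recurrent component on $\mathbb{S}\mathcal{V}$ lies over some ${}_{\mathbb{P}}M_i$, and (ii) says each ${}_{\mathbb{P}}M_i$ is covered by at most two such components. Together with the Selgrade bound $1\le l\le d$, this yields $1\le l_1\le 2l\le 2d$.
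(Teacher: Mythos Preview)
Your proposal is correct and follows essentially the same route as the paper: fix a reference point $v_0$ over ${}_{\mathbb{P}}M_i$, use Lemma~\ref{Lemma_sphere}(i) to obtain the four sign alternatives for an arbitrary $w$, note that each mixed case forces $-v_0\in\Omega(v_0)$ so Lemma~\ref{Lemma_sphere}(iii) collapses $P$ to one component, and read off (i) and (iii). Your one cosmetic improvement is defining ${}_{\mathbb{S}}M^-:=-{}_{\mathbb{S}}M^+$ from the outset via the antipodal conjugacy, which hands you the relation ${}_{\mathbb{S}}M^+=-{}_{\mathbb{S}}M^-$ for free rather than by a repeated argument as in the paper.
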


\begin{proof}
(i) Suppose that there is $\mathbb{P}v_{0}\in\,_{\mathbb{P}}E_{i}$ such that
$-v_{0}\in\Omega(v_{0})$. Then Lemma \ref{Lemma_sphere}(ii) shows the
assertion. The converse is obvious.

(ii) Fix $v\in\mathbb{S}\mathcal{V}$ with $\mathbb{P}v\in\,_{\mathbb{P}}M_{i}%
$. For any $w\in\mathbb{S}\mathcal{V}$ with $\mathbb{P}w\in\,_{\mathbb{P}%
}M_{i}$ and $\varepsilon,T>0$, one finds as in Lemma \ref{Lemma_sphere}(i) an
$(\varepsilon,T)$-chain $v_{1}^{1},\ldots,v_{n}^{1}$ on $\mathbb{S}%
\mathcal{V}$ with times $T_{0},\ldots,T_{n-1}\geq T$ from $v$ to $w$ or $-w$.
Define%
\[
A^{\pm}:=\left\{  v\in\mathbb{S}\mathcal{V}\left\vert \mathbb{P}%
v\in\,_{\mathbb{P}}M_{i}\text{ and }\pm v\in\Omega(v_{0})\text{ and }v_{0}%
\in\Omega(\pm v)\right.  \right\}  .
\]
If $A^{\pm}\not =\varnothing$ there are chain recurrent components
$_{\mathbb{S}}M^{\pm}$ with $A^{\pm}\subset\,_{\mathbb{S}}M^{\pm}$. For every
point $v$ with $\mathbb{P}v\in\,_{\mathbb{P}}M_{i}$ one has $v\in\Omega
(v_{0})$ or $-v\in\Omega(v_{0})$, and $v_{0}\in\Omega(v)$ or $v_{0}\in
\Omega(-v)$. If there is $v\in\Omega(v_{0})$ and $v_{0}\in\Omega(-v)$ it
follows that $-v_{0}\in\Omega(v)\subset\Omega(v_{0})$. Then (i) implies the assertion.

Hence we may assume that one of the following properties hold for every
$v\in\mathbb{S}\mathcal{V}$ with $\mathbb{P}v\in\,_{\mathbb{P}}M_{i}$:
$v\in\Omega(v_{0})$ and $v_{0}\in\Omega(v)$, or $-v\in\Omega(v_{0})$ and
$v_{0}\in\Omega(-v)$. It follows that
\[
\{v\in\mathbb{S}\mathcal{V}\left\vert \mathbb{P}v\in\,_{\mathbb{P}}%
M_{i}\right.  \}\subset A^{+}\cup A^{-}\subset\,_{\mathbb{S}}M^{+}%
\cup\,_{\mathbb{S}}M^{-}%
\]
(one of the sets $_{\mathbb{S}}M^{+}$ and $_{\mathbb{S}}M^{-}$may be void). By
Lemma \ref{Lemma_sphere}(ii) the projections of $\,_{\mathbb{S}}M^{+}$ and
$_{\mathbb{S}}M^{-}$ are contained in $_{\mathbb{P}}M_{i}$. This proves the
first equality in (\ref{sphere_1}). The same arguments with $-v$ instead of
$v$ implies the second equality.

(iii) This is a consequence of assertion (ii).
\end{proof}

Next we lift chains on $\mathbb{S}\mathcal{V}$ to chains on $\mathcal{V}$.

\begin{definition}
\label{Definition_lift}Let $\zeta$ be an $(\varepsilon,T)$-chain on
$\mathbb{S}\mathcal{V}$ given by $n\in\mathbb{N},\ v_{0},\allowbreak
\ldots,\allowbreak v_{n}\in\mathbb{S}\mathcal{V}$ and $T_{0},\ldots
,T_{n-1}\geq T$ with
\[
d(\mathbb{S}\Phi(T_{i},v_{i}),v_{i+1})<\varepsilon\text{ }%
\,\text{for\thinspace}\,i=0,\ldots,n-1.
\]
The lift of $\zeta$ to $\mathcal{V}$ is the chain $\widehat{\zeta}$ given by
$n\in\mathbb{N},\,T_{0},\ldots,T_{n-1}\geq T$ and%
\[
w_{0}=v_{0},w_{i}=\prod_{j=0}^{i-1}\left\Vert \Phi(T_{j},v_{j})\right\Vert
v_{i}\text{ for }i=1,\ldots,n.
\]
Furthermore, $\alpha\widehat{\zeta}$ for $\alpha>0$ denotes the chain on
$\mathcal{V}$ given by $n\in\mathbb{N},\,T_{0},\ldots,T_{n-1}\geq T$ and
$\alpha w_{i}$ for $i=0,\ldots,n$.
\end{definition}

Note the following lemma.

\begin{lemma}
\label{Lemma1}Let $\zeta$ be an $(\varepsilon,T)$-chain on $\mathbb{S}%
\mathcal{V}$ as above and define $C_{-1}=1$ and $C_{i}=\prod_{j=0}%
^{i}\left\Vert \Phi(T_{j},v_{j})\right\Vert $ for $i=0,\ldots,n-1$. Then for
$\varepsilon>0$ small enough the lift $\alpha\widehat{\zeta},\alpha>0$, to
$\mathcal{V}$ satisfies $\left\Vert w_{i}\right\Vert =\alpha C_{i-1}$ and
$\Phi(T_{i},\alpha w_{i})=\alpha C_{i}\mathbb{S}\Phi(T_{i},v_{i})$ for all $i$.
\end{lemma}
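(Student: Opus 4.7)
The plan is to verify both identities by a direct unfolding of the definitions, with no induction required. The only ingredients are (a) fiberwise linearity of $\Phi(T,\cdot)$, (b) the normalization $\|v_i\|=1$ coming from $v_i\in\mathbb{S}\mathcal{V}$, and (c) the identity $\Phi(T,v)=\|\Phi(T,v)\|\,\mathbb{S}\Phi(T,v)$ built into the definition of $\mathbb{S}\Phi$.

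First I would check the norm. Since $w_i=\prod_{j=0}^{i-1}\|\Phi(T_j,v_j)\|\,v_i$ sits in the single fiber $\mathcal{V}_{\pi(v_i)}$ and $\|v_i\|=1$, we immediately read off $\|w_i\|=\prod_{j=0}^{i-1}\|\Phi(T_j,v_j)\|=C_{i-1}$; rescaling by $\alpha>0$ gives $\|\alpha w_i\|=\alpha C_{i-1}$, matching the first claim (interpreting the $w_i$ in the statement as the points $\alpha w_i$ of the lift $\alpha\widehat{\zeta}$).

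Next I would compute the image under $\Phi(T_i,\cdot)$. Fiberwise linearity yields
\[
\Phi(T_i,\alpha w_i)=\alpha\prod_{j=0}^{i-1}\|\Phi(T_j,v_j)\|\,\Phi(T_i,v_i),
\]
and substituting $\Phi(T_i,v_i)=\|\Phi(T_i,v_i)\|\,\mathbb{S}\Phi(T_i,v_i)$ absorbs the last factor into $C_i$, producing $\alpha C_i\,\mathbb{S}\Phi(T_i,v_i)$ as required.

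The hypothesis that $\varepsilon>0$ be sufficiently small plays no role in these algebraic identities themselves. I read it as a standing assumption ensuring that consecutive chain points $v_i$ and $v_{i+1}$, whose base points $\pi(v_i)\cdot T_i$ and $\pi(v_{i+1})$ are a priori distinct, lie in a common trivialization so that the Hilbert-space structures from (\ref{linear})--(\ref{form}) can be compared coherently once one begins estimating the chain error $d(\Phi(T_i,\alpha w_i),\alpha w_{i+1})$ in the metric (\ref{d}). I do not anticipate any genuine obstacle: the lemma is essentially bookkeeping which packages the accumulated stretching factors $C_i$ so that a sphere-bundle chain lifts to a chain in $\mathcal{V}$ along which $\Phi$ acts by the clean scaling formula above, a fact that will be used in the sequel to transport recurrence information between $\mathbb{S}\mathcal{V}$ and $\mathcal{V}$.
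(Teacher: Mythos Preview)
Your proposal is correct and mirrors the paper's own proof almost verbatim: the paper simply notes that the norm formula is obvious and then uses linearity of $\Phi$ together with $\mathbb{S}\Phi(T_i,v_i)=\Phi(T_i,v_i)/\|\Phi(T_i,v_i)\|$ to obtain $\Phi(T_i,\alpha w_i)=\alpha C_i\,\mathbb{S}\Phi(T_i,v_i)$. Your remark that the smallness of $\varepsilon$ is irrelevant to these identities and only matters once one compares points in a common trivialization (as in Lemma~\ref{Lemma2}) is also accurate.
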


\begin{proof}
The formula for $\left\Vert \alpha w_{i}\right\Vert $ obviously holds.
Together with linearity of $\Phi$ this yields%
\[
\Phi(T_{i},\alpha w_{i})=\alpha\prod_{j=0}^{i}\left\Vert \Phi(T_{j}%
,v_{j})\right\Vert \frac{\Phi(T_{i},v_{i})}{\left\Vert \Phi(T_{i}%
,v_{i})\right\Vert }=\alpha C_{i}\mathbb{S}\Phi(T_{i},v_{i}).
\]

\end{proof}

\section{The Morse spectrum\label{Section3}}

For linear flows on vector bundles, a number of spectral notions and their
relations have been considered; cf., e.g., Sacker and Sell \cite{SacS},
Johnson, Palmer, and Sell \cite{JoPS87}, Gr\"{u}ne \cite{Grue00}, Braga Barros
and San Martin \cite{BraSM07}, Kawan and Stender \cite{KawS12}. An appropriate
spectral notion in the present context is provided by the Morse spectrum
defined as follows (cf. Colonius and Kliemann \cite{ColK96}, and San Martin
and Seco \cite{SanS10} and Bochi \cite[Section 4]{Bochi18} for generalizations).

For $\varepsilon,\,T>0$ let an ($\varepsilon,T)$-chain $\zeta$ of
$\mathbb{P}\Phi$ be given by $n\in\mathbb{N}\mathbf{,}$ $T_{0},\ldots
,T_{n-1}\geq T$, and $p_{0},\ldots,p_{n}\in\mathbb{P}\mathcal{V}$ with
$d(\mathbb{P}\Phi(T_{i},p_{i}),p_{i+1})<\varepsilon$ for $i=0,\ldots,n-1.$
With total time $\sigma=\sum_{i=0}^{n-1}T_{i}$ let the exponential growth rate
of $\zeta$ be
\[
\lambda(\zeta):=\frac{1}{\sigma}\left(  \sum_{i=0}^{n-1}\log\left\Vert
\Phi(T_{i},v_{i})\right\Vert -\log\left\Vert v_{i}\right\Vert \right)  \text{
with }v_{i}\in\mathbb{P}^{-1}\,p_{i}.
\]
Define the Morse spectrum of a subbundle $\mathcal{V}_{i}$ generated by
$_{\mathbb{P}}M_{i}$ by
\[
\mathbf{\Sigma}_{Mo}(\mathcal{V}_{i})=\left\{
\begin{array}
[c]{c}%
\lambda\in\mathbb{R}\text{,\ there\ are}\;\varepsilon^{k}\rightarrow
0,\,T^{k}\rightarrow\infty\;\,\text{and}\,\;\\
(\varepsilon^{k},\,T^{k})\text{-chains}\mathrm{\;}\zeta^{k}\;\text{in}%
\;_{\mathbb{P}}M_{i}\;\text{with}\mathrm{\;}\lambda(\zeta^{k})\rightarrow
\lambda\;\text{as}\;k\rightarrow\infty
\end{array}
\right\}  .
\]
For a chain recurrent component $_{\mathbb{S}}M_{j},j\in\{1,\ldots,l_{1}\}$,
in $\mathbb{S}\mathcal{V}$ let the cone bundle be%
\[
\mathcal{V}_{j}^{+}:=\{\alpha v\in\mathcal{V}\left\vert \alpha>0\text{ and
}v\in\,_{\mathbb{S}}M_{j}\right.  \},
\]
and define the Morse spectrum $\Sigma_{Mo}(\mathcal{V}_{j}^{+})$ analogously
via chains in $\,_{\mathbb{S}}M_{j}$.

Every Lyapunov exponent is contained in some $\mathbf{\Sigma}_{Mo}%
(\mathcal{V}_{i},\Phi)$ and each $\mathbf{\Sigma}_{Mo}(\mathcal{V}_{i},\Phi)$
is a compact interval with boundary points corresponding to Lyapunov exponents
for ergodic measures. It suffices to consider periodic chains in the
definition of the Morse spectrum; cf. Colonius and Kliemann \cite[Chapter
5]{ColK00} for these claims. The spectral intervals $\mathbf{\Sigma}%
_{Mo}(\mathcal{V}_{i})$ need not be disjoint (in particular, there may exist
two \textquotedblleft central\textquotedblright\ subbundles with $0$ in the
Morse spectrum); cf. Salamon and Zehnder \cite[Example 2.14]{SalZ88} and also
Example \ref{MORSE:ex47}.

The following lemma shows that the Morse spectra on the sphere bundle coincide
with the Morse spectra on the projective bundle.

\begin{lemma}
\label{Proposition_Morse}If $_{\mathbb{S}}M_{j}$ is a chain recurrent
component on $\mathbb{S}\mathcal{V}$ that projects to a chain recurrent
component $_{\mathbb{P}}M_{i}$ in $\mathbb{P}\mathcal{V}$, then the Morse
spectra of the cone bundle $\mathcal{V}_{j}^{+}$ generated by $_{\mathbb{S}}M$
and of the subbundle $\mathcal{V}_{i}$ generated by $_{\mathbb{P}}M_{i}$
coincide,
\[
\Sigma_{Mo}(\mathcal{V}_{j}^{+})=\Sigma_{Mo}(\mathcal{V}_{i}).
\]

\end{lemma}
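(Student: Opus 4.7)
The plan is to establish the claimed equality by two inclusions, using the canonical projection $\mathbb{S}\mathcal{V}\to\mathbb{P}\mathcal{V}$ in one direction and the sign-choice lifting from Lemma~\ref{Lemma_sphere}(i) in the other. For the inclusion $\Sigma_{Mo}(\mathcal{V}_{j}^{+})\subseteq\Sigma_{Mo}(\mathcal{V}_{i})$ the argument is direct. The projection $q:\mathbb{S}\mathcal{V}\to\mathbb{P}\mathcal{V}$ is continuous and non-expansive in the natural quotient metric, so any $(\varepsilon,T)$-chain $(v_{0},\ldots,v_{n})$ in $\,_{\mathbb{S}}M_{j}$ pushes forward to an $(\varepsilon,T)$-chain $(\mathbb{P}v_{0},\ldots,\mathbb{P}v_{n})$ contained in $\,_{\mathbb{P}}M_{i}$ by Lemma~\ref{Lemma_sphere}(ii). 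Since $\|v_{i}\|=1$, one may choose the sphere representatives themselves as projective representatives in the definition of $\lambda(\zeta)$, which makes the growth rates of the two chains literally equal, and letting $\varepsilon^{k}\to 0$, $T^{k}\to\infty$ gives the inclusion.

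For the reverse inclusion $\Sigma_{Mo}(\mathcal{V}_{i})\subseteq\Sigma_{Mo}(\mathcal{V}_{j}^{+})$, I take an $(\varepsilon,T)$-chain $\zeta=(p_{0},\ldots,p_{n})$ in $\,_{\mathbb{P}}M_{i}$ and pick a starting lift $v_{0}\in\,_{\mathbb{S}}M_{j}$ with $\mathbb{P}v_{0}=p_{0}$; such a $v_{0}$ exists because Theorem~\ref{Theorem_sphere} implies that $\,_{\mathbb{S}}M_{j}$ projects onto $\,_{\mathbb{P}}M_{i}$. Applying the sign-choice construction from the proof of Lemma~\ref{Lemma_sphere}(i) recursively produces an $(\varepsilon,T)$-chain $(v_{0},v_{1}^{1},\ldots,v_{n}^{1})$ on $\mathbb{S}\mathcal{V}$ with the same time parameters $T_{0},\ldots,T_{n-1}$, where each $v_{i}^{1}\in\mathbb{P}^{-1}(p_{i})\cap\mathbb{S}\mathcal{V}$. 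By Theorem~\ref{Theorem_sphere}(ii) the preimage $\mathbb{P}^{-1}(\,_{\mathbb{P}}M_{i})\cap\mathbb{S}\mathcal{V}$ equals either the single component $\,_{\mathbb{S}}M_{j}$ or the disjoint union $\,_{\mathbb{S}}M^{+}\cup\,_{\mathbb{S}}M^{-}$ with $\,_{\mathbb{S}}M_{j}$ being one of these. In the first case every $v_{i}^{1}$ is automatically in $\,_{\mathbb{S}}M_{j}$. In the second, compactness and disjointness give $\delta:=d(\,_{\mathbb{S}}M^{+},\,_{\mathbb{S}}M^{-})>0$, and I restrict to $\varepsilon<\delta$; an easy induction then keeps the chain in $\,_{\mathbb{S}}M_{j}$, since $\mathbb{S}\Phi(T_{i},v_{i}^{1})\in\,_{\mathbb{S}}M_{j}$ by invariance, and $v_{i+1}^{1}\in\,_{\mathbb{S}}M^{+}\cup\,_{\mathbb{S}}M^{-}$ lying within $\varepsilon<\delta$ of $\,_{\mathbb{S}}M_{j}$ must itself lie in $\,_{\mathbb{S}}M_{j}$.

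The growth rate is preserved because $\|v_{i}^{1}\|=1$ and, by linearity, $\|\Phi(T_{i},v_{i}^{1})\|=\|\Phi(T_{i},\pm v_{i})\|=\|\Phi(T_{i},v_{i})\|$, so both chains contribute the same sum $\tfrac{1}{\sigma}\sum_{i}\log\|\Phi(T_{i},v_{i})\|$ to $\lambda$. Choosing a sequence of projective chains $\zeta^{k}$ in $\,_{\mathbb{P}}M_{i}$ with parameters $\varepsilon^{k}\to 0$ (eventually below $\delta$) and $T^{k}\to\infty$ and growth rates converging to a given $\lambda\in\Sigma_{Mo}(\mathcal{V}_{i})$ therefore yields corresponding chains in $\,_{\mathbb{S}}M_{j}$ with the same growth rates, proving the reverse inclusion. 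The one nontrivial step---and the main obstacle---is precisely ruling out the possibility that the lifted chain jumps between $\,_{\mathbb{S}}M^{+}$ and $\,_{\mathbb{S}}M^{-}$; the positive separation $\delta$ between the two compact components is what makes this impossible once $\varepsilon$ is small enough.
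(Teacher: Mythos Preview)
Your proof is correct and follows essentially the same route as the paper's: project sphere chains to projective chains for one inclusion, and lift projective chains via the sign-choice construction of Lemma~\ref{Lemma_sphere}(i) for the other, observing that $\|\Phi(T_i,\pm v_i)\|=\|\Phi(T_i,v_i)\|$ so the growth rate is unchanged. Your treatment is in fact more careful than the paper's brief argument, since you explicitly invoke the positive separation $\delta=d(\,_{\mathbb{S}}M^{+},\,_{\mathbb{S}}M^{-})>0$ to confine the lifted chain to $\,_{\mathbb{S}}M_{j}$ once $\varepsilon<\delta$, a point the paper's proof leaves implicit (it only records that the lifted points lie in $\,_{\mathbb{S}}M^{+}\cup\,_{\mathbb{S}}M^{-}$).
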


\begin{proof}
The inclusion \textquotedblleft$\subset$\textquotedblright\ holds since every
$(\varepsilon,T)$-chain in $_{\mathbb{S}}M_{j}$ projects to an $(\varepsilon
,T)$-chain in $_{\mathbb{P}}M_{i}$ with the same exponent. For the converse,
consider an $(\varepsilon,T)$-chain $\zeta$ in $_{\mathbb{P}}M_{i}$ given by
$T_{0},\ldots,T_{n-1}\geq T$ and points $\mathbb{P}v_{0},\ldots,\mathbb{P}%
v_{n}$. Then we can write%
\[
\lambda(\zeta)=\frac{1}{\sigma}\left(  \sum_{i=0}^{n-1}\log\left\Vert
\Phi(T_{i},\pm v_{i})\right\Vert -\log\left\Vert \pm v_{i}\right\Vert \right)
,
\]
where, by (\ref{sphere_1}), we may assume that $\pm v_{i}\in\,_{\mathbb{S}%
}M^{+}\cup\,_{\mathbb{S}}M^{-}\subset\mathbb{S}\mathcal{V}$. Each of the
chains constructed in the proof of Lemma \ref{Lemma_sphere}(i) has chain
exponent $\lambda(\zeta)$.
\end{proof}

Lemma \ref{Lemma1} implies for the lift of a chain $\zeta$ that $\left\Vert
w_{n}\right\Vert =\prod_{i=0}^{n-1}\left\Vert \Phi(T_{j},v_{j})\right\Vert
=e^{\lambda(\zeta)\sigma}$. Hence $\lambda(\zeta)>0$ and $\lambda(\zeta)<0$
imply $\left\Vert w_{n}\right\Vert >1$ and $\left\Vert w_{n}\right\Vert <1$, resp.

\section{The flow on the Poincar\'{e} sphere bundle\label{Section4}}

This section analyzes when a subbundle $\mathcal{V}_{i}$ yields a chain
transitive set on the Poincar\'{e} sphere bundle. In particular, a sufficient
condition is derived in terms of the Morse spectrum.

For a vector bundle $\pi:\mathcal{V}\rightarrow B$ the projection to the
Poincar\'{e} sphere can be defined in the following way (cf. Perko
\cite[Section 3.10]{Perko} for the case in $\mathbb{R}^{d}$). Define a Hilbert
space by $H^{1}=H\times\mathbb{R}$ with inner product%
\[
\left\langle (h_{1},\alpha_{1}),(h_{2},\alpha_{2})\right\rangle =\left\langle
h_{1},h_{2}\right\rangle _{H}+\alpha_{1}\alpha_{2}\text{ for }(h_{1}%
,\alpha_{1}),(h_{2},\alpha_{2})\in H\times\mathbb{R},
\]
and consider the corresponding vector bundle $\pi^{1}:\mathcal{V}%
^{1}=\mathcal{V}\times\mathbb{R}\rightarrow B$ with $\pi^{1}(v,r)=\pi(v)$
using the local trivializations $\varphi_{a}^{1}:(\pi^{1})^{-1}(U_{a}%
)\rightarrow U_{a}\times H^{1},a\in A$, defined by%
\[
\varphi_{a}^{1}(v,r)=(\varphi_{a}(v),r)=(b,(x,r))\text{ for }(v,r)\in
\mathcal{V}^{1}=\mathcal{V}\times\mathbb{R}\text{ and }(b,x)=\varphi_{a}(v).
\]
Define a map $\pi_{P}:\mathcal{V}\rightarrow\mathbb{S}\mathcal{V}^{1}$ by%
\begin{equation}
\pi_{P}(v)=\left(  \frac{v}{\left\Vert (v,1)\right\Vert },\frac{1}{\left\Vert
(v,1)\right\Vert }\right)  \text{ for }v\in\mathcal{V}. \label{S_local2}%
\end{equation}
Then $\pi_{P}$ is a homeomorphism onto the open upper hemisphere of
$\mathbb{S}\mathcal{V}^{1}$%
\[
\mathbb{S}_{P}^{+}\mathcal{V}:=\{(v,r)\in\mathcal{V}^{1}\left\vert \left\Vert
(v,r)\right\Vert =1\text{ and }r>0\right.  \}=\left\{  \left.  \pi
_{P}(v)\right\vert v\in\mathcal{V}\right\}  .
\]
We call $\mathbb{S}\mathcal{V}^{1}$ the Poincar\'{e} sphere bundle. Denote by
$\mathbb{S}_{P}^{0}\mathcal{V}$ the set of all $(v,0)\in\mathbb{S}%
\mathcal{V}^{1}$, the \textquotedblleft equator\textquotedblright\ of the
Poincar\'{e} sphere bundle, and define a homeomorphism%
\[
e:\mathbb{S}\mathcal{V\rightarrow}\mathbb{S}_{P}^{0}\mathcal{V}:v\mapsto
(v,0)).
\]
The closed upper hemisphere bundle can be written as the disjoint union
$\overline{\mathbb{S}_{P}^{+}\mathcal{V}}=\mathbb{S}_{P}^{+}\mathcal{V}%
\cup\mathbb{S}_{P}^{0}\mathcal{V}$.

A linear flow $\Phi(t,v)$ on $\mathcal{V}$ is extended to a linear flow on
$\mathcal{V}^{1}$ by $\Phi^{1}(t,(v,r))=(\Phi(t,v),r)$ and every trajectory
$\mathbb{S}\Phi(t,v)=\frac{\Phi(t,v)}{\left\Vert \Phi(t,v)\right\Vert }%
,t\in\mathbb{R}$, on $\mathbb{S}\mathcal{V}$ is mapped onto
\[
e(\mathbb{S}\Phi(t,v))=\mathbb{S}\Phi^{1}(t,(v,0))=(\mathbb{S}\Phi
(t,v),0),\,t\in\mathbb{R}.
\]

\begin{lemma}
The map $e$ is a conjugacy between the flow $\mathbb{S}\Phi$ on $\mathbb{S}%
\mathcal{V}$ and the flow $\mathbb{S}\Phi^{1}$ restricted to $\mathbb{S}%
_{P}^{0}\mathcal{V}$. Every chain recurrent component $_{\mathbb{S}}M_{j}$ is
mapped onto a chain recurrent component $e(_{\mathbb{S}}M_{j})$ and
$\overline{\pi_{P}(\mathcal{V}_{j}^{+})}\cap\mathbb{S}_{P}^{0}\mathcal{V=}%
e(_{\mathbb{S}}M_{j})$.
\end{lemma}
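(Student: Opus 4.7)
The plan is to verify the three assertions in turn. For the conjugacy, I would check the intertwining relation directly: since $\Phi^1(t,(v,0)) = (\Phi(t,v),0)$ by definition and the inner product on $\mathcal{V}^1$ induced from $\langle(h_1,\alpha_1),(h_2,\alpha_2)\rangle = \langle h_1,h_2\rangle_H + \alpha_1\alpha_2$ satisfies $\|(\Phi(t,v),0)\|_{\mathcal{V}^1} = \|\Phi(t,v)\|_{\mathcal{V}}$ (the $r$-coordinate contributes nothing), one has
\[
\mathbb{S}\Phi^1(t,e(v)) = \left(\frac{\Phi(t,v)}{\|\Phi(t,v)\|},\,0\right) = e(\mathbb{S}\Phi(t,v)).
\]
Together with the fact that $e$ is a homeomorphism by construction, this gives the topological conjugacy of $\mathbb{S}\Phi$ on $\mathbb{S}\mathcal{V}$ with the restriction of $\mathbb{S}\Phi^1$ to $\mathbb{S}_P^0\mathcal{V}$.

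The second assertion then follows formally: $e$ and $e^{-1}$ are uniformly continuous on the compact spaces involved, so an $(\varepsilon,T)$-chain on one side transports, at the cost of adjusting $\varepsilon$, to an $(\varepsilon',T)$-chain on the other. Hence chain transitivity and maximality are preserved, and each chain recurrent component ${}_{\mathbb{S}}M_j$ is sent to a chain recurrent component $e({}_{\mathbb{S}}M_j)$ of the restricted flow $\mathbb{S}\Phi^1|_{\mathbb{S}_P^0\mathcal{V}}$.

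For the identity $\overline{\pi_P(\mathcal{V}_j^+)} \cap \mathbb{S}_P^0\mathcal{V} = e({}_{\mathbb{S}}M_j)$ I would argue by direct computation. Using that the inner product on $\mathcal{V}^1$ built analogously to (\ref{form}) satisfies, in fiber form, $\|(u,r)\|_{\mathcal{V}^1}^2 = \|u\|_{\mathcal{V}}^2 + \gamma(\pi(u))\,r^2$ for some continuous positive function $\gamma$ on $B$, one finds for $v \in {}_{\mathbb{S}}M_j$ and $\alpha > 0$ that $\|(\alpha v,1)\|/\alpha \to 1$ as $\alpha \to \infty$, whence
\[
\pi_P(\alpha v) = \left(\frac{\alpha v}{\|(\alpha v,1)\|},\,\frac{1}{\|(\alpha v,1)\|}\right) \longrightarrow (v,0) = e(v).
\]
This gives the inclusion $e({}_{\mathbb{S}}M_j) \subseteq \overline{\pi_P(\mathcal{V}_j^+)} \cap \mathbb{S}_P^0\mathcal{V}$. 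Conversely, a point $(v,0)$ in the intersection arises as a limit $\pi_P(\alpha_n w_n) \to (v,0)$ with $\alpha_n > 0$, $w_n \in {}_{\mathbb{S}}M_j$; the $\mathbb{R}$-coordinate forces $1/\|(\alpha_n w_n,1)\| \to 0$, hence $\alpha_n \to \infty$, after which the fiber coordinate forces $w_n \to v$. Since ${}_{\mathbb{S}}M_j$ is closed (chain recurrent components on the compact space $\mathbb{S}\mathcal{V}$ are closed), one obtains $v \in {}_{\mathbb{S}}M_j$ and so $(v,0) \in e({}_{\mathbb{S}}M_j)$.

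I do not anticipate a serious obstacle: the first two claims are formal consequences of the construction of $e$ and $\Phi^1$, and the third is essentially the calculation of the limit of $\pi_P$ along rays in the cone bundle combined with closedness of chain recurrent components. The only bookkeeping point requiring care is that convergence in $\mathcal{V}^1$ decouples appropriately into a fiber part and an $\mathbb{R}$-part uniformly over the base $B$; this is immediate because $B$ is compact and the function $\gamma$ entering the norm is continuous and bounded below.
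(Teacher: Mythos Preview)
Your proposal is correct and follows essentially the same route as the paper: the paper dismisses the conjugacy and the preservation of chain recurrent components as obvious, and then proves the set identity $\overline{\pi_{P}(\mathcal{V}_{j}^{+})}\cap\mathbb{S}_{P}^{0}\mathcal{V}=e(_{\mathbb{S}}M_{j})$ by the same two-inclusion limit argument you give. Your version is simply more explicit, and your remark about the weight $\gamma(b)=\sum_{a}d(b,B\setminus U_{a})$ in the bundle norm is a legitimate bookkeeping point that the paper silently suppresses.
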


\begin{proof}
Obviously, $e$ is a conjugacy mapping chain recurrent components onto chain
recurrent components. The inclusion $\overline{\pi_{P}(\mathcal{V}_{j}^{+}%
)}\cap\mathbb{S}_{P}^{0}\mathcal{V\subset}e(_{\mathbb{S}}M_{j})$ holds since
for $v_{n}\in\mathcal{V}_{j}^{+}$ with $\left\Vert v_{n}\right\Vert
\rightarrow\infty$ for $n\rightarrow\infty$ one finds%
\[
\pi_{P}(v_{n})=\left(  \frac{v_{n}}{\left\Vert (v_{n},1)\right\Vert },\frac
{1}{\left\Vert (v_{n},1)\right\Vert }\right)  \text{ with }\frac{1}{\left\Vert
(v_{n},1)\right\Vert }\rightarrow0
\]
and every cluster point of $\frac{v_{n}}{\left\Vert (v_{n},1)\right\Vert }$ is
in $_{\mathbb{S}}M_{j}$. For the converse, let $(v,0)\in e(_{\mathbb{S}}%
M_{j})$. There are $v_{n}\in\mathcal{V}_{j}^{+}$ with $\left\Vert
v_{n}\right\Vert \rightarrow\infty$ and $\frac{v_{n}}{\left\Vert
(v_{n},1)\right\Vert }\rightarrow v$ for $n\rightarrow\infty$. Then the
assertion follows from $\pi_{P}(v_{n})\rightarrow(v,0)$ for $n\rightarrow
\infty$.
\end{proof}

Furthermore, the flow $\Phi$ on $\mathcal{V}$ induces a flow $\pi_{P}\Phi$ on
$\mathbb{S}_{P}^{+}\mathcal{V}$ given by%
\[
\pi_{P}\Phi(t,\pi_{P}v)=\left(  \frac{\Phi(t,v)}{\left\Vert (\Phi
(t,v),1)\right\Vert },\frac{1}{\left\Vert (\Phi(t,v),1)\right\Vert }\right)
\text{ for }t\in\mathbb{R},v\in\mathcal{V}.
\]

\begin{lemma}
\label{Lemma2}Let, for $\varepsilon>0$ small enough, $\zeta$ be an
$(\varepsilon,T)$-chain on $\mathbb{S}\mathcal{V}$ with lift $\widehat{\zeta}$
to $\mathcal{V}$. Then for $\alpha>0$ the associated chain $\pi_{P}%
(\alpha\widehat{\zeta})$ on $\mathbb{S}_{P}^{+}\mathcal{V}$ is an
$(\varepsilon,T)$-chain.
\end{lemma}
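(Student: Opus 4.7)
The plan is to apply Lemma~\ref{Lemma1} to compute both endpoints of each chain hop explicitly in $\mathbb{S}_P^+\mathcal{V}$, and then to observe that the Poincar\'e projection $\pi_P$ contracts fiber displacements, with a contraction factor common to the two endpoints, so that the $\varepsilon$-inequality transfers from $\mathbb{S}\mathcal{V}$ to $\mathbb{S}_P^+\mathcal{V}$.

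From Lemma~\ref{Lemma1} one has $\Phi(T_i,\alpha w_i) = \alpha C_i\,\mathbb{S}\Phi(T_i,v_i)$ and $\alpha w_{i+1} = \alpha C_i\,v_{i+1}$. Since $\mathbb{S}\Phi(T_i,v_i)$ and $v_{i+1}$ both have unit fiber-norm, the normalizer $\rho_i := \sqrt{\alpha^2 C_i^2 + 1}$ is common to both, giving
\[
\pi_P\Phi(T_i,\pi_P(\alpha w_i)) = \Bigl(\tfrac{\alpha C_i}{\rho_i}\,\mathbb{S}\Phi(T_i,v_i),\,\tfrac{1}{\rho_i}\Bigr),\qquad
\pi_P(\alpha w_{i+1}) = \Bigl(\tfrac{\alpha C_i}{\rho_i}\,v_{i+1},\,\tfrac{1}{\rho_i}\Bigr).
\]
These two points share the same second coordinate, and their first coordinates are the unit vectors $\mathbb{S}\Phi(T_i,v_i)$ and $v_{i+1}$ scaled by the common factor $\alpha C_i/\rho_i<1$.

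Next I would pass to a common local trivialization. By compactness of $B$ the finite cover $\{U_a\}$ has a Lebesgue number $\delta>0$; assuming $\varepsilon<\delta$, for every $i$ the points $\mathbb{S}\Phi(T_i,v_i)$ and $v_{i+1}$ project into a common $U_{a_i}$. Writing them as $\varphi_{a_i}^{-1}(b_1,\tilde x_1)$ and $\varphi_{a_i}^{-1}(b_2,\tilde x_2)$, the chart distance on $\mathcal{V}^1$ between their images under $\pi_P$ reads
\[
d_{a_i}^1 = \max\!\Bigl\{d(b_1,b_2),\,\tfrac{\alpha C_i}{\rho_i}\|\tilde x_1-\tilde x_2\|_{a_i}\Bigr\}
\,\le\, \max\{d(b_1,b_2),\|\tilde x_1-\tilde x_2\|_{a_i}\} = d_{a_i}(\mathbb{S}\Phi(T_i,v_i),v_{i+1}),
\]
because the $\mathbb{R}$-components coincide and the $H$-component is scaled by $\alpha C_i/\rho_i<1$. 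Since the global metric is by~(\ref{d}) the infimum of such chart sums, this bound descends to the distance on $\mathbb{S}_P^+\mathcal{V}$; the times $T_i\ge T$ are unchanged under lift and projection, so $\pi_P(\alpha\widehat\zeta)$ is an $(\varepsilon,T)$-chain.

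The main obstacle is the careful comparison between the global infimum metric $d$ and the local chart metric $d_a$: one has $d\le d_a$ automatically, but a reverse bound holds only locally and up to a multiplicative constant depending on the chart. The hypothesis ``$\varepsilon>0$ small enough'' plays a double role here, forcing each hop into a single chart via the Lebesgue number of $\{U_a\}$ and letting the strict contraction factor $\alpha C_i/\rho_i<1$ absorb any constant arising from the equivalence of $d$ and $d_a$, uniformly in $i$ and in the magnitudes $C_i$, which may be arbitrarily large or small along the chain.
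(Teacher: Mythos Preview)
Your computational core is identical to the paper's: you apply Lemma~\ref{Lemma1}, write out $\pi_P\Phi(T_i,\alpha w_i)$ and $\pi_P(\alpha w_{i+1})$ with the common normalizer $\rho_i=\sqrt{\alpha^2C_i^2+1}$, pass to a single chart, and obtain the chart-level inequality
\[
d_{a}^{1}\bigl(\pi_P\Phi(T_i,\alpha w_i),\pi_P(\alpha w_{i+1})\bigr)\le d_{a}\bigl(\mathbb{S}\Phi(T_i,v_i),v_{i+1}\bigr).
\]
The difference lies entirely in how one closes the gap between $d$ and $d_a$. The paper simply asserts that for $\varepsilon$ small enough the infimum in~(\ref{d}) is realized by a single chart, so that $d=d_a$ \emph{exactly} on both sides; then the displayed inequality together with $d\le d_a^1$ on $\mathcal{V}^1$ gives $d<\varepsilon$ directly.

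Your proposed workaround in the last paragraph does not succeed. You suggest that the strict contraction factor $\alpha C_i/\rho_i<1$ can absorb a multiplicative constant $K$ coming from the local equivalence $d_a\le K\,d$. But $\alpha C_i/\rho_i=\alpha C_i/\sqrt{\alpha^2C_i^2+1}\to 1$ as $C_i\to\infty$, so it is \emph{not} bounded away from $1$ uniformly in $i$, and you yourself note that the $C_i$ can be arbitrarily large along the chain. Moreover, the base component $d(b_1,b_2)$ in the $\max$ is not contracted at all. Thus if one only has $d_a\le K\,d$ with $K>1$, your argument yields a $(K\varepsilon,T)$-chain, not an $(\varepsilon,T)$-chain. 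The fix is exactly what the paper uses: for $\varepsilon$ below a threshold depending only on the cover $\{U_a\}$, nearby points have $d=d_a$ with no constant, so no absorption is needed.
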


\begin{proof}
For $\zeta$ as in Definition \ref{Definition_lift}, the lifted chain
$\alpha\widehat{\zeta}$ is given by $\alpha w_{i}=\alpha C_{i-1}v_{i}$ for
$i=0,\ldots,n$. By Lemma \ref{Lemma1} it follows that $\left\Vert
w_{i}\right\Vert =C_{i-1}$ and $\Phi(T_{i},\alpha w_{i})=\alpha C_{i}%
\mathbb{S}\Phi(T_{i},v_{i})$. For $\varepsilon>0$ small enough, there is $a\in
A$ with%
\begin{align*}
d\left(  \mathbb{S}\Phi(T_{i},v_{i}),v_{i+1}\right)   &  =d_{a}\left(
\mathbb{S}\Phi(T_{i},v_{i}),v_{i+1}\right)  ,\\
d\left(  \pi_{P}\Phi(T_{i},\alpha w_{i}),\pi_{P}(\alpha w_{i+1})\right)   &
=d_{a}\left(  \pi_{P}\Phi(T_{i},\alpha w_{i}),\pi_{P}(\alpha w_{i+1})\right)
.
\end{align*}
One computes%
\begin{align*}
\pi_{P}\Phi(T_{i},\alpha w_{i})  &  =\left(  \frac{\Phi(T_{i},\alpha w_{i}%
)}{\left\Vert (\Phi(T_{i},\alpha w_{i}),1)\right\Vert },\frac{1}{\left\Vert
(\Phi(T_{i},\alpha w_{i}),1)\right\Vert }\right) \\
&  =\left(  \frac{\alpha C_{i}\mathbb{S}\Phi(T_{i},v_{i})}{\left\Vert (\alpha
C_{i}\mathbb{S}\Phi(T_{i},v_{i}),1)\right\Vert },\frac{1}{\left\Vert (\alpha
C_{i}\mathbb{S}\Phi(T_{i},v_{i}),1)\right\Vert }\right) \\
&  =\left(  \frac{\alpha C_{i}\mathbb{S}\Phi(T_{i},v_{i})}{\sqrt{\alpha
^{2}C_{i}^{2}+1}},\frac{1}{\sqrt{\alpha^{2}C_{i}^{2}+1}}\right) \\
&  =\frac{1}{\sqrt{\alpha^{2}C_{i}^{2}+1}}(\alpha C_{i}\mathbb{S}\Phi
(T_{i},v_{i}),1)
\end{align*}
and%
\[
\pi_{P}(\alpha w_{i+1})=\left(  \frac{\alpha w_{i+1}}{\left\Vert (\alpha
w_{i+1},1)\right\Vert },\frac{1}{\left\Vert (\alpha w_{i+1},1)\right\Vert
}\right)  =\left(  \frac{\alpha C_{i}v_{i+1}}{\sqrt{\alpha^{2}C_{i}^{2}+1}%
},\frac{1}{\sqrt{\alpha^{2}C_{i}^{2}+1}}\right)  .
\]
The local trivializations have, with $(b_{i}\cdot T_{i},y_{i})=\varphi
_{a}(\mathbb{S}\Phi(T_{i},v_{i}))$, the form%
\[
\varphi_{a}^{1}\left(  \pi_{P}\Phi(T_{i},\alpha w_{i})\right)  =\left(
b_{i}\cdot T_{i},\frac{\alpha C_{i}y_{i}}{\sqrt{\alpha^{2}C_{i}^{2}+1}}%
,\frac{1}{\sqrt{\alpha^{2}C_{i}^{2}+1}}\right)  ,
\]
and, with $(b_{i+1},x_{i+1})=\varphi_{a}(v_{i+1})$,%
\[
\varphi_{a}^{1}\left(  \pi_{P}(\alpha w_{i+1})\right)  =\left(  b_{i+1}%
,\frac{\alpha C_{i}x_{i+1}}{\sqrt{\alpha^{2}C_{i}^{2}+1}},\frac{1}%
{\sqrt{\alpha^{2}C_{i}^{2}+1}}\right)  .
\]
The distance $d_{a}$ in $\mathbb{S}\mathcal{V}^{1}\subset\mathcal{V}^{1}$ is
(cf. (\ref{d_a}))%
\[
d_{a}((v,r),(v^{\prime},r^{\prime}))=\max\{d(b,b^{\prime}),\left\Vert
(x,r)-(x^{\prime},r^{\prime})\right\Vert _{a}\},
\]
where $(v,r)=(\varphi_{a}^{1})^{-1}(b,(x,r))$ and $(v^{\prime},r^{\prime
})=(\varphi_{a}^{1})^{-1}(b^{\prime},(x^{\prime},r^{\prime}))$. It follows
that%
\begin{align*}
&  d_{a}\left(  \pi_{P}\Phi(T_{i},\alpha w_{i}),\pi_{P}(\alpha w_{i+1})\right)
\\
&  =\max\left\{  d(b_{i}\cdot T_{i},b_{i+1}),\frac{1}{\sqrt{\alpha^{2}%
C_{i}^{2}+1}}\left\Vert \left(  \alpha C_{i}y_{i},1\right)  -\left(  \alpha
C_{i}x_{i+1},1\right)  \right\Vert _{a}\right\} \\
&  \leq\max\left\{  d(b_{i}\cdot T_{i},b_{i+1}),\frac{\alpha C_{i}}%
{\sqrt{\alpha^{2}C_{i}^{2}+1}}\left\Vert y_{i}-x_{i+1}\right\Vert _{a}\right\}
\\
&  \leq\max\left\{  d(b_{i}\cdot T_{i},b_{i+1}),\left\Vert y_{i}%
-x_{i+1}\right\Vert _{a}\right\} \\
&  =d_{a}(\mathbb{S}\Phi(T_{i},v_{i}),v_{i+1}).
\end{align*}
This concludes the proof.
\end{proof}

The following result shows that one obtains chain transitive sets on the
closed upper hemisphere of the Poincar\'{e} sphere bundle from the chain
recurrent components on the sphere bundle $\mathbb{S}\mathcal{V}$.

\begin{theorem}
\label{Theorem_Poincare1}Let $\mathcal{V}_{j}^{+}=\{\alpha v\in\mathcal{V}%
\left\vert \alpha>0,v\in\,_{\mathbb{S}}M_{j}\right.  \}$ be the cone bundle on
$\mathcal{V}$ generated by a chain recurrent component $_{\mathbb{S}}%
M_{j},j\in\{1,\ldots,l_{1}\}$, on $\mathbb{S}\mathcal{V}$. Then the following
assertions are equivalent:

(a) The set $\overline{\pi_{P}\mathcal{V}_{j}^{+}}$ is chain transitive on the
closed upper hemisphere $\overline{\mathbb{S}_{P}^{+}\mathcal{V}}$ of the
Poincar\'{e} sphere bundle.

(b) The cone bundle $\mathcal{V}_{j}^{+}$ contains a half-line $l=\{\alpha
v_{0}\left\vert \alpha>0\right.  \}$ for some $v_{0}\in\,_{\mathbb{S}}M_{j}$
such that $\pi_{P}l$ is chain transitive on $\mathbb{S}_{P}^{+}\mathcal{V}$.
\end{theorem}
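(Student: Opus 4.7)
My plan is to introduce the intermediate condition

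(b') \emph{$\pi_P\mathcal{V}_j^+$ is chain transitive on $\mathbb{S}_P^+\mathcal{V}$,}

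\noindent and to show (b) $\Leftrightarrow$ (b') $\Leftrightarrow$ (a). The direction (b') $\Rightarrow$ (b) is immediate, since $\pi_P l \subset \pi_P\mathcal{V}_j^+$ makes every chain witnessing transitivity of the larger set also serve for $\pi_P l$. For (b) $\Rightarrow$ (b'), given $p_i = \pi_P(\alpha_i v_i)$ with $v_i \in \,_{\mathbb{S}}M_j$ and $\alpha_i > 0$, $i=1,2$, I would concatenate three $(\varepsilon,T)$-chains on $\mathbb{S}_P^+\mathcal{V}$: (i) a chain from $v_1$ to $v_0$ inside the chain transitive set $\,_{\mathbb{S}}M_j$, lifted, scaled by $\alpha_1$, and transported through Lemma~\ref{Lemma2} into a chain from $p_1$ to some $\pi_P(\alpha_1 C_1 v_0) \in \pi_P l$; (ii) a chain in $\pi_P l$ from $\pi_P(\alpha_1 C_1 v_0)$ to $\pi_P(\beta v_0)$ provided by hypothesis (b); (iii) a chain from $v_0$ to $v_2$ in $\,_{\mathbb{S}}M_j$, scaled by $\beta$ and transported via Lemma~\ref{Lemma2}, giving a chain from $\pi_P(\beta v_0)$ to $\pi_P(\beta C_2 v_2)$ for some $C_2 > 0$ determined by the lift. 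Choosing $\beta := \alpha_2/C_2$ forces the terminal point to equal $p_2$.

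For (b') $\Leftrightarrow$ (a), the direction (b') $\Rightarrow$ (a) follows because chains on $\mathbb{S}_P^+\mathcal{V}$ are chains on $\overline{\mathbb{S}_P^+\mathcal{V}}$, after which a standard endpoint-approximation argument, using continuity of the time-$T_i$ maps of the extended flow $\mathbb{S}\Phi^1$ on the compact $\overline{\mathbb{S}_P^+\mathcal{V}}$, extends chain transitivity from $\pi_P\mathcal{V}_j^+$ to its closure $\overline{\pi_P\mathcal{V}_j^+}$. For (a) $\Rightarrow$ (b'), given $p_1, p_2 \in \pi_P\mathcal{V}_j^+$ and an $(\varepsilon/3, T)$-chain $q_0 = p_1, q_1, \ldots, q_n = p_2$ in $\overline{\mathbb{S}_P^+\mathcal{V}}$, I would substitute every intermediate equator point $q_i = (w_i, 0)$ by $q_i' := \pi_P(\beta_i w_i) \in \mathbb{S}_P^+\mathcal{V}$, keeping the endpoints and the remaining open-hemisphere points unchanged. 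The formula (\ref{S_local2}) gives $q_i' \to q_i$ as $\beta_i \to \infty$, and linearity of $\Phi$ implies $\pi_P\Phi(T_i, q_i') \to \mathbb{S}\Phi^1(T_i, q_i)$; by choosing each $\beta_i$ large enough the perturbed sequence remains an $(\varepsilon, T)$-chain, now lying entirely in $\mathbb{S}_P^+\mathcal{V}$.

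The main obstacle is this last perturbation step. Because the chain times $T_i \geq T$ admit no uniform upper bound, uniform continuity of the flow across a compact time interval is unavailable; instead I must invoke continuity of each individual time-$T_i$ slice of $\mathbb{S}\Phi^1$ on $\overline{\mathbb{S}_P^+\mathcal{V}}$, which is legitimate because only finitely many such $T_i$ occur in a given chain. A short case analysis according to whether each of $q_i, q_{i+1}$ lies in the open hemisphere or on the equator is also required to confirm that every perturbed chain gap $d(\pi_P\Phi(T_i, q_i'), q_{i+1}')$ stays below the target $\varepsilon$.
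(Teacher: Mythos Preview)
Your constructive argument for (b) $\Rightarrow$ (b') is essentially identical to the paper's proof of (b) $\Rightarrow$ (a): the paper likewise takes chains $\zeta^1$ (from $v_1$ to $v_0$) and $\zeta^2$ (from $v_0$ to $v_2$) in $_{\mathbb{S}}M_j$, lifts and scales them via Lemma~\ref{Lemma2} to obtain $(\varepsilon,T)$-chains $\pi_P(\alpha_1\widehat{\zeta^1})$ and $\pi_P\bigl(\tfrac{\alpha_2}{\gamma_2}\widehat{\zeta^2}\bigr)$ on $\mathbb{S}_P^+\mathcal{V}$, and bridges the two endpoints on $\pi_P l$ using hypothesis (b); it then passes to the closure exactly as in your (b') $\Rightarrow$ (a). So this half of your proposal matches the paper.

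The difference is in the reverse implication. The paper dismisses (a) $\Rightarrow$ (b) in one line (``It is clear that (a) implies (b)''), whereas you route through (a) $\Rightarrow$ (b') and supply a genuine perturbation argument to push equator points $q_i=(w_i,0)$ of a chain in $\overline{\mathbb{S}_P^+\mathcal{V}}$ back into the open hemisphere via $q_i'=\pi_P(\beta_i w_i)$. Your argument is sound: since $\mathbb{S}\Phi^1$ restricted to $\mathbb{S}_P^+\mathcal{V}$ coincides with $\pi_P\Phi$, continuity of each fixed time-$T_i$ map on the compact $\overline{\mathbb{S}_P^+\mathcal{V}}$ lets you choose $\beta_i$ large enough one index at a time, and the finiteness of the chain handles the absence of a uniform time bound. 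This is more careful than the paper, which tacitly identifies chain transitivity on the open and closed hemispheres; your extra step is what actually justifies that identification.
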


\begin{proof}
It is clear that (a) implies (b). For the converse let $z_{1},z_{2}%
\in\mathcal{V}_{j}^{+}$, hence there are $\alpha_{1},\alpha_{2}>0$ and
$v_{1},v_{2}\in\,_{\mathbb{S}}M_{j}$ with $z_{1}=\alpha_{1}v_{1}$ and
$z_{2}=\alpha_{2}v_{2}$. Fix $\varepsilon,T>0$. We construct an $(\varepsilon
,T)$-chain from $\pi_{P}z_{1}$ to $\pi_{P}z_{2}$. There is an $(\varepsilon
,T)$-chain $\zeta^{1}$ in $_{\mathbb{S}}M_{j}$ from $v_{1}$ to $v_{0}$, hence
the induced chain $\alpha_{1}\widehat{\zeta^{1}}$ in $\mathcal{V}$ starts in
$\alpha_{1}v_{0}=z_{1}$ and ends in $\alpha_{1}\gamma_{1}v_{0}\in l$ for some
$\gamma_{1}>0$. Since $v_{0},v_{2}\in\mathrm{\,}_{\mathbb{S}}M_{j}$ one finds
an $(\varepsilon,T)$-chain $\zeta^{2}$ in $_{\mathbb{S}}M_{j}$ from $v_{0}$ to
$v_{2}$. The induced chain $\widehat{\zeta^{2}}$ in $\mathcal{V}$ ends in
$\gamma_{2}v_{2}$ for some $\gamma_{2}>0$. This yields an $(\varepsilon
,T)$-chain $\frac{\alpha_{2}}{\gamma_{2}}\widehat{\zeta^{2}}$ in $\mathcal{V}$
from $\frac{\alpha_{2}}{\gamma_{2}}v_{0}$ to $\frac{\alpha_{2}}{\gamma_{2}%
}\gamma_{2}v_{2}=\alpha_{2}v_{2}=z_{2}$.

By chain transitivity of $\pi_{P}l$ one finds an $(\varepsilon,T)$-chain
$\zeta^{0}$ in $\mathbb{S}_{P}^{+}\mathcal{V}$ from $\pi_{P}(\alpha_{1}%
\gamma_{1}v_{0})\in\pi_{P}l$ to $\pi_{P}(\frac{\alpha_{2}}{\gamma_{2}}%
v_{0})\in\pi_{P}l$. By Lemma \ref{Lemma2} the $(\varepsilon,T)$-chains
$\zeta^{1}$ and $\zeta^{2}$ on $\mathbb{S}\mathcal{V}$ induce $(\varepsilon
,T)$-chains $\pi_{P}\left(  \frac{\alpha_{2}}{\gamma_{2}}\widehat{\zeta^{2}%
}\right)  $ and $\pi_{P}\left(  \alpha_{1}\widehat{\zeta^{1}}\right)  $ on
$\mathbb{S}_{P}^{+}\mathcal{V}$. Then one obtains an $(\varepsilon,T)$-chain
from $\pi_{P}z_{1}$ to $\pi_{P}z_{2}$ by the concatenation%
\[
\pi_{P}\left(  \frac{\alpha_{2}}{\gamma_{2}}\widehat{\zeta^{2}}\right)
\circ\,\zeta^{0}\circ\pi_{P}\left(  \alpha_{1}\,\widehat{\zeta^{1}}\right)  .
\]
Thus $\pi_{P}\mathcal{V}_{j}^{+}$ is chain transitive which implies that also
its closure $\overline{\pi_{P}\mathcal{V}_{j}^{+}}$ is chain transitive.
\end{proof}

Next we discuss when the existence of a half-line $l$ as above can be
guaranteed. It is clear that a necessary condition is that $0$ is in the Morse
spectrum. Conversely, Theorem \ref{Theorem_Poincare2} will show that $l$
exists if $0$ is in the interior of the Morse spectrum.

The following lemma will be needed in Step 2 of the next proof.

\begin{lemma}
\label{Lemma_dio}Let $a,b,c$ be real numbers with $a,b,c>1$ and $\frac{\log
b}{\log a}\in\mathbb{R}\setminus\mathbb{Q}$. Then for every $\delta>0$ there
are $k,\ell\in\mathbb{N}$ such that $\left\vert a^{k}b^{-\ell}-c\right\vert
<\delta$.
\end{lemma}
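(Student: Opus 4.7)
The plan is to take logarithms and reduce to a density statement coming from an irrational rotation. Set $\alpha=\log a$, $\beta=\log b$, $\gamma=\log c$; all three are positive, and by hypothesis $\beta/\alpha\notin\mathbb{Q}$. Since the exponential is continuous, the target inequality $|a^{k}b^{-\ell}-c|<\delta$ follows from $|k\alpha-\ell\beta-\gamma|<\delta'$ for a sufficiently small $\delta'=\delta'(\delta,c)>0$. It therefore suffices to show that $\gamma$ lies in the closure of the set $\{k\alpha-\ell\beta:k,\ell\in\mathbb{N}\}$.

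For this I would invoke Kronecker's density theorem (equivalently, Weyl equidistribution) for the irrational $\beta/\alpha$: the fractional parts $\{\ell\beta/\alpha\}_{\ell\in\mathbb{N}}$ are dense in $[0,1)$, so $\{\ell\beta\bmod\alpha:\ell\in\mathbb{N}\}$ is dense in $[0,\alpha)$. Given $\delta'>0$, I can thus pick $\ell\in\mathbb{N}$ so that $\ell\beta\bmod\alpha$ approximates $(-\gamma)\bmod\alpha$ within $\delta'$; this produces an integer $k$ with $|k\alpha-\ell\beta-\gamma|<\delta'$.

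The only subtle point is that the preceding step yields $k\in\mathbb{Z}$, but we need $k\in\mathbb{N}$. Here I exploit the fact that equidistribution provides infinitely many admissible $\ell$: taking $\ell$ large enough that $\ell\beta+\gamma>\alpha$ forces the nearest integer to $(\ell\beta+\gamma)/\alpha$ to be positive, so the corresponding $k$ belongs to $\mathbb{N}$. Exponentiating the inequality $|k\alpha-\ell\beta-\gamma|<\delta'$ then delivers $|a^{k}b^{-\ell}-c|<\delta$. The main work is conceptual, lying in the logarithmic reduction and recognition of the problem as an instance of Kronecker density; the rest is routine bookkeeping, with the positivity of $k$ the only point requiring care.
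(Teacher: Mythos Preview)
Your proof is correct and follows essentially the same route as the paper: take logarithms to reduce $|a^{k}b^{-\ell}-c|<\delta$ to the inhomogeneous inequality $|k\log a-\ell\log b-\log c|<\delta'$, then invoke Kronecker's theorem for the irrational $\log b/\log a$. The paper simply cites Kronecker (Cassels, Theorem~IV) after dividing through by $\log a$, whereas you unpack the same density statement via the fractional parts $\{\ell\beta/\alpha\}$ and handle the positivity of $k$ explicitly; that extra care is appropriate, since the standard inhomogeneous Kronecker statement yields $\ell\in\mathbb{N}$ but only $k\in\mathbb{Z}$ a priori.
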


\begin{proof}
Since the logarithm is continuously invertible, it suffices to show that for
every $\delta>0$ there are $k,\ell\in\mathbb{N}$ with
\[
\delta>\left\vert \log(a^{k}b^{-\ell})-\log c\right\vert =\left\vert k\log
a-\ell\log b-\log c\right\vert ,
\]
or, dividing by $\log a>0$,%
\begin{equation}
\left\vert k-\ell\frac{\log b}{\log a}-\frac{\log c}{\log a}\right\vert
<\frac{\delta}{\log a}. \label{K1}%
\end{equation}
Since $\frac{\log b}{\log a}$ is irrational, Kronecker's theorem (cf. Cassels
\cite[Theorem IV, p.53]{Cass57}) implies that for any $\delta>0$ there are
$k,\ell\in\mathbb{N}$ satisfying (\ref{K1}).
\end{proof}

\begin{remark}
The proof above uses Kronecker's theorem (1884) \cite{Kron84}. The special
case of this theorem needed here also follows from an earlier result by
Tchebychef (1866) \cite[Th\'{e}or\`{e}me, p. 679]{Tcheb}; cf. Colonius,
Santana, and Setti \cite[Lemma 3.4]{ColRS}, where Lemma \ref{Lemma_dio} is
used to prove an approximate controllability result for bilinear control
systems, using a different spectral notion.
\end{remark}

\begin{lemma}
\label{Proposition_cone}Consider a chain recurrent component $_{\mathbb{S}%
}M_{j}$ on the sphere bundle $\mathbb{S}\mathcal{V}$ and assume that there is
$\delta_{0}>0$ such that for all $\varepsilon,T>0$ there are periodic
$(\varepsilon,T)$\textit{-chains} $\zeta^{\pm}$ in $_{\mathbb{S}}M_{j}$ with
chain exponents $\lambda(\zeta^{+})>\delta_{0}$ and $\lambda(\zeta
^{-})<-\delta_{0}$. Then the cone bundle $\mathcal{V}_{j}^{+}:=\{\alpha
v\in\mathcal{V}\left\vert \alpha>0,v\in\,_{\mathbb{S}}M_{j}\right.  \}$
contains a half-line $l=\{\alpha v\left\vert \alpha>0\right.  \}$ for some
$v\in\,_{\mathbb{S}}M_{j}$ such that $\pi_{P}l$ is chain transitive.
\end{lemma}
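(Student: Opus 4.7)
The plan is to fix any $v_0\in\,_{\mathbb{S}}M_j$, take $l=\{\alpha v_0\mid\alpha>0\}$, and show that for every $\alpha_1,\alpha_2>0$ and every $\varepsilon',T'>0$ there is an $(\varepsilon',T')$-chain on $\mathbb{S}_P^+\mathcal{V}$ from $\pi_P(\alpha_1 v_0)$ to $\pi_P(\alpha_2 v_0)$. The core idea is to construct a periodic $(\varepsilon,T)$-chain in $\,_{\mathbb{S}}M_j$ based at $v_0$ whose lift in $\mathcal{V}$ scales $\alpha_1 v_0$ to essentially $\alpha_2 v_0$; Lemma \ref{Lemma2} then transports this to a chain on the Poincar\'e sphere bundle, and one closes up the last step using continuity of $\pi_P$ along $l$.

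To build the scaling chain, fix $\varepsilon\le\varepsilon'$ small and $T\ge T'$, and use the hypothesis to select periodic $(\varepsilon,T)$-chains $\zeta^\pm$ in $\,_{\mathbb{S}}M_j$ based at points $u^\pm$ with total times $\sigma^\pm$. Set $a:=e^{\lambda(\zeta^+)\sigma^+}>1$ and $b:=e^{-\lambda(\zeta^-)\sigma^-}>1$, so that by Lemma \ref{Lemma1} each full run of $\zeta^+$ multiplies the norm of the lifted point by $a$ and each full run of $\zeta^-$ by $1/b$. Chain transitivity of $\,_{\mathbb{S}}M_j$ yields $(\varepsilon,T)$-chains $\eta_1\colon v_0\to u^+$, $\eta_2\colon u^+\to u^-$, $\eta_3\colon u^-\to v_0$ inside $\,_{\mathbb{S}}M_j$, and the concatenation
\[
\Gamma_{k,\ell}\;:=\;\eta_3\circ(\zeta^-)^{\ell}\circ\eta_2\circ(\zeta^+)^{k}\circ\eta_1
\]
is a periodic $(\varepsilon,T)$-chain based at $v_0$. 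By multiplicativity of the lift scaling across concatenations, $\alpha_1\widehat{\Gamma_{k,\ell}}$ starts at $\alpha_1 v_0$ and ends at $\alpha_1 Ca^{k}b^{-\ell}v_0$, where $C>0$ is a constant depending only on the fixed connectors $\eta_1,\eta_2,\eta_3$.

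One now chooses $k,\ell$ so that $Ca^{k}b^{-\ell}$ is close to $\alpha_2/\alpha_1$. Provided $\log b/\log a\notin\mathbb{Q}$, Lemma \ref{Lemma_dio}, applied to the target $\alpha_2/(\alpha_1 C)$ (or to its reciprocal if the target is below $1$, with the roles of $a,b$ swapped), supplies $k,\ell\in\mathbb{N}$ with $|a^{k}b^{-\ell}-\alpha_2/(\alpha_1 C)|$ as small as required. Lemma \ref{Lemma2} then turns $\alpha_1\widehat{\Gamma_{k,\ell}}$ into an $(\varepsilon,T)$-chain on $\mathbb{S}_P^+\mathcal{V}$ from $\pi_P(\alpha_1 v_0)$ to $\pi_P(\alpha_1 Ca^{k}b^{-\ell}v_0)$; uniform continuity of $\pi_P$ along the half-line places this endpoint within $\varepsilon'-\varepsilon$ of $\pi_P(\alpha_2 v_0)$, and replacing the final point of the chain by $\pi_P(\alpha_2 v_0)$ produces the desired $(\varepsilon',T')$-chain.

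The hard part is arranging $\log b/\log a\notin\mathbb{Q}$, which the hypothesis does not automatically supply. I would address this by exploiting the freedom in the hypothesis: the exponents $\lambda(\zeta^\pm)$ are only restricted to lie beyond $\pm\delta_0$, and the periods $\sigma^\pm$ can be modified, for instance by inserting small $(\varepsilon,T)$-recurrent loops at $u^\pm$ (available because $u^\pm$ belong to the chain recurrent component $\,_{\mathbb{S}}M_j$), so that the achievable pairs $(\log a,\log b)$ are rich enough to admit a selection with irrational ratio. An alternative would be to work with three base periodic chains and replace Lemma \ref{Lemma_dio} by a multi-dimensional Kronecker argument; either way, this genericity step is the delicate ingredient of the proof.
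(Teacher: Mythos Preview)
Your strategy matches the paper's: fix $v_0\in\,_{\mathbb{S}}M_j$, concatenate iterates of $\zeta^+$ and $\zeta^-$ together with connector chains back to $v_0$, lift to $\mathcal{V}$, and use Lemma~\ref{Lemma_dio} to select $(k,\ell)$ so that the lifted endpoint lands close to $\alpha_2 v_0$; then transport everything to $\mathbb{S}_P^+\mathcal{V}$ via Lemma~\ref{Lemma2}. The paper organizes the bookkeeping a little differently---its Step~1 first arranges (using a uniform bound on the total time of $(\varepsilon,T)$-chains in $_{\mathbb{S}}M_j$) that both periodic chains pass through the \emph{same} fixed point $v$ with exponents still beyond $\pm\delta_0$---but this is equivalent to your use of the connectors $\eta_1,\eta_2,\eta_3$ and the constant $C$.

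The only genuine gap is the one you already flag: forcing $\log b/\log a\notin\mathbb{Q}$. Neither of your two suggestions is carried out, and neither is clearly sufficient as stated. Inserting extra recurrent loops at $u^\pm$ replaces $\log a$ by $\log a$ plus the log-multiplier of the inserted loop, but you give no argument that the set of achievable log-multipliers is rich enough to escape the countable set of rational multiples of $\log b$; and a three-chain Kronecker argument would still need a rational-independence hypothesis you have not produced. The paper's resolution is more concrete and local: after ensuring $n^+\ge 2$ (replacing $\zeta^+$ by $\zeta^+\circ\zeta^+$ if needed), it perturbs a single \emph{intermediate} point $v_1^+$ of $\zeta^+$ to a nearby point $v_1$. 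Since $\Phi(T_1^+,\cdot)$ is a linear isomorphism between fibers, the map $v_1\mapsto\|\Phi(T_1^+,v_1)\|$ varies continuously, so the modified product
\[
\beta_0:=\|\Phi(T_1^+,v_1)\|\prod_{i\neq 1}\|\Phi(T_i^+,v_i^+)\|
\]
can be adjusted so that $\log\beta^-/\log\beta_0\notin\mathbb{Q}$, while for $v_1$ close enough to $v_1^+$ the modified sequence is still a $(2\varepsilon,T)$-chain. This single-point perturbation of the chain---rather than adding new pieces to it---is the device you are missing.
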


\begin{proof}
\textbf{Step 1}. First we show that one may choose the initial points $v^{\pm
}$ of $\zeta^{\pm}$ independently of $\varepsilon,T$.

Fix any point $v\in\,_{\mathbb{S}}M_{j}$. Let $\varepsilon,T>0$ and consider
for $S\geq T$ periodic $(\varepsilon,S)$-chains $\zeta^{\pm}$ with
$\lambda(\zeta^{+})>\delta_{0}$ and $\lambda(\zeta^{-})<-\delta_{0}$. The
proof is given for $\zeta^{+}$, the proof for $\zeta^{-}$ is analogous and
will be omitted. It suffices to prove that for every $\delta>0$ there exists a
periodic $(\varepsilon,T)$-chain $\zeta$ through $v$ with $\left\vert
\lambda(\zeta^{+})-\lambda(\zeta)\right\vert \,<\delta$.

Let $\zeta^{+}$ be given by $T_{0},\ldots,T_{n}\geq S,v_{0}=v^{+},v_{1}%
,\ldots,v_{n}=v^{+}\in\,_{\mathbb{S}}M_{j}$ with total time $\sigma$ and%
\[
d(\Phi(T_{i},v_{i}),v_{i+1})<\varepsilon\text{ for }i=0,\ldots,n-1.
\]
By Colonius and Kliemann \cite[Lemma B.2.23]{ColK00} there exists $\bar
{T}(\varepsilon,T)>0$ such that for all $v^{\prime},v^{\prime\prime}%
\in\,_{\mathbb{S}}M_{j}$ there is an $(\varepsilon,T)$-chain from $v^{\prime}$
to $v^{\prime\prime}$ with total time equal to or less than $\bar
{T}(\varepsilon,T)$.

Choose $(\varepsilon,T)$-chains $\zeta^{1}$ from $v$ to $v^{+}$ and $\zeta
^{2}$ from $v^{+}$ to $v$ on $\mathbb{S}\mathcal{V}$. For $k=1,2$ let them be
given by $v_{0}^{k},\ldots,v_{n_{k}}^{k}$ and times $T_{0}^{k},\,\ldots
,T_{n_{k}-1}^{k}>T$ with total time $\sigma_{k}\leq\bar{T}(\varepsilon,T)$ and
chain exponent%
\[
\lambda(\zeta^{k})=\frac{1}{\sigma_{k}}\sum_{i=0}^{n_{k}-1}\log\left\Vert
\Phi(T_{i}^{k},v_{i}^{k})\right\Vert \,.
\]
By compactness of the sphere bundle and continuity together with $\sigma_{k}$
$\leq\bar{T}(\varepsilon,T)$ there is a constant $c_{0}=c_{0}(\varepsilon
,T)>0$ such that%
\[
\sum_{i=0}^{n_{k}-1}\log\left\Vert \Phi(T_{i}^{k},v_{i}^{k})\right\Vert \leq
c_{0}.
\]
The concatenation $\zeta^{2}\circ\zeta^{+}\circ\zeta^{1}$ is a periodic
$(\varepsilon,T)$-chain through $v$ with chain exponent
\begin{align*}
&  \lambda(\zeta^{2}\circ\zeta^{+}\circ\zeta^{1})\\
&  =\frac{1}{\sigma+\sigma_{1}+\sigma_{2}}\left[  \sum_{i=0}^{n-1}%
\log\left\Vert \Phi(T_{i},v_{i})\right\Vert +\sum_{i=0}^{n_{1}-1}%
\log\left\Vert \Phi(T_{i}^{1},v_{i}^{1})\right\Vert +\sum_{i=0}^{n_{2}-1}%
\log\left\Vert \Phi(T_{i}^{2},v_{i}^{2})\right\Vert \right]  .
\end{align*}
Since $\frac{1}{\sigma+\sigma_{1}+\sigma_{2}}-\frac{1}{\sigma}=\frac
{\sigma_{1}+\sigma_{2}}{\sigma+\sigma_{1}+\sigma_{2}}\cdot\frac{1}{\sigma}$ it
follows that%
\[
\left(  \frac{1}{\sigma+\sigma_{1}+\sigma_{2}}-\frac{1}{\sigma}\right)
\sum_{i=0}^{n-1}\log\left\Vert \Phi(T_{i},v_{i})\right\Vert =\frac{\sigma
_{1}+\sigma_{2}}{\sigma+\sigma_{1}+\sigma_{2}}\lambda(\zeta^{+}).
\]
Thus one may choose $S$ and hence $\sigma$ large enough such that as desired%
\[
\left\vert \lambda(\zeta^{+})-\lambda(\zeta^{2}\circ\zeta^{+}\circ\zeta
^{1})\right\vert \,<\delta.
\]

\textbf{Step 2. }For $v\in$\textbf{ }$_{\mathbb{S}}M_{j}$ the projection
$\pi_{P}l$ of the half-line $l:=\{\alpha v\in\mathcal{V}\left\vert
\alpha>0\right.  \}$ is chain transitive.

We construct for all $\alpha_{0}v,\alpha_{1}v\in l$ and every $\varepsilon
,T>0$ an $(\varepsilon,T)$-chain from $\pi_{P}(\alpha_{0}v)$ to $\pi
_{P}(\alpha_{1}v)$. The strategy for the proof is to go $k$ times through the
periodic chain $\zeta^{+}$ and $\ell$ times through the periodic chain
$\zeta^{-}$, and to adjust the numbers $k,\ell\in\mathbb{N}$ such that in
$\mathcal{V}$ one approaches $\alpha_{1}v$. Fix $\varepsilon,T>0$, add a
superscript $\pm$ to the components of the chain $\zeta^{\pm}$, and abbreviate%
\begin{equation}
\beta^{\pm}=\prod_{i=0}^{n^{\pm}-1}\left\Vert \Phi(T_{i}^{\pm},v_{i}^{\pm
})\right\Vert . \label{beta}%
\end{equation}
Define for $k,\ell\in\mathbb{N}$ a periodic $(\varepsilon,T)$-chain
$\zeta^{k,\ell}$ through $v$ on $\mathbb{S}\mathcal{V}$ by the concatenation%
\[
\zeta^{k,\ell}=(\zeta^{-})^{\ell}\circ(\zeta^{+})^{k}.
\]
The endpoint of the lift $\widehat{\zeta^{k,\ell}}$ of $\zeta^{k,\ell}$ to
$\mathcal{V}$ is given by $\left(  \beta^{-}\right)  ^{\ell}\left(  \beta
^{+}\right)  ^{k}v$. The chain $\alpha_{0}\widehat{\zeta^{k,\ell}}$ starts in
$\alpha_{0}v$ and ends in $\alpha_{0}w_{n}=\left(  \beta^{+}\right)
^{k}\left(  \beta^{-}\right)  ^{\ell}\alpha_{0}v$.

Lemma \ref{Lemma_dio} implies the following: If one can choose $\beta^{+}$ and
$\beta^{-}$ such that $\frac{\log\beta^{-}}{\log\beta^{+}}$ is irrational then
for every $\delta>0$ there are $k,\ell\in\mathbb{N}$ with%
\[
\left\vert \left(  \beta^{+}\right)  ^{k}\left(  \beta^{-}\right)  ^{\ell
}-\frac{\alpha_{1}}{\alpha_{0}}\right\vert <\delta.
\]
Hence one can choose $k,\ell$ such that%
\[
\left\Vert \left(  \beta^{-}\right)  ^{\ell}\left(  \beta^{+}\right)
^{k}\alpha_{0}v-\alpha_{1}v\right\Vert <\alpha_{0}\delta<\varepsilon.
\]
This yields a $(2\varepsilon,T)$-chain $\zeta^{k,\ell}$ on $\mathbb{S}%
\mathcal{V}$ from $\alpha_{0}v$ to $\alpha_{1}v$ such that, by Lemma
\ref{Lemma2}, $\pi_{P}\left(  \alpha_{0}\widehat{\zeta^{k,\ell}}\right)  $ is
a $(2\varepsilon,T)$-chain from $\pi_{P}(\alpha_{0}v)$ to $\pi_{P}(\alpha
_{1}v)$.

Since $\varepsilon,T>0$ are arbitrary, this completes Step 2 once we have
justified the assumption that we can choose $\beta^{+}$ and $\beta^{-}$ as in
(\ref{beta}) such that $\frac{\log\beta^{-}}{\log\beta^{+}}$ is irrational. If
this is not the case, we will adjust $v_{1}^{+}$ appropriately. First note
that we may assume for the length of the periodic chain $\zeta^{+}$ that
$n^{+}>1$ (otherwise we replace it by $\zeta^{+}\circ\zeta^{+}$). The map
$\Phi(T_{1}^{+},\cdot)$ restricted to the fiber $\mathcal{V}_{\pi(v)}$ \ with
range $\mathcal{V}_{\pi(v)\cdot T_{1}^{+}}$ is a linear isomorphism. It maps
any neighborhood of $v_{1}^{+}$ onto a neighborhood of $\Phi(T_{1}^{+}%
,v_{1}^{+})$. Thus one finds arbitrarily close to $v_{1}^{+}$ points $v_{1}%
\in\mathcal{V}_{\pi(v)}$ such that for%
\[
\beta_{0}:=\left\Vert \Phi(T_{1},v_{1})\right\Vert \prod_{i=0,i\not =1}%
^{n^{+}-1}\left\Vert \Phi(T_{i}^{+},v_{i}^{+})\right\Vert
\]
the quotient $\frac{\log\beta^{-}}{\log\beta_{0}}$is irrational. Choosing the
neighborhood $N$ small enough we may replace $v_{1}^{+}$ by $v_{1}$ without
destroying the property of a $(2\varepsilon,T)$-chain from $\alpha_{0}v$ to
$\alpha_{1}v$.
\end{proof}

The following theorem provides a sufficient condition for the property that
the cone bundle induced by a central subbundle in the Selgrade decomposition
yields a chain transitive set on the upper hemisphere of the Poincar\'{e}
sphere bundle.

\begin{theorem}
\label{Theorem_Poincare2}If a cone bundle $\mathcal{V}_{j}^{+}$ generated by a
chain recurrent component $_{\mathbb{S}}M_{j},j\in\{1,\ldots,l_{1}\}$, on
$\mathbb{S}\mathcal{V}$ satisfies $0\in\mathrm{int}\left(  \Sigma
_{Mo}(\mathcal{V}_{j}^{+})\right)  $, then the set $\overline{\pi
_{P}\mathcal{V}_{j}^{+}}$ is chain transitive on the closed upper hemisphere
$\overline{\mathbb{S}_{P}^{+}\mathcal{V}}$ of the Poincar\'{e} sphere bundle.
\end{theorem}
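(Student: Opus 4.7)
The plan is to derive Theorem \ref{Theorem_Poincare2} as a direct assembly of Lemma \ref{Proposition_cone} and Theorem \ref{Theorem_Poincare1}. The hypothesis $0\in\mathrm{int}(\Sigma_{Mo}(\mathcal{V}_j^+))$ should yield the two-sided exponent condition that feeds Lemma \ref{Proposition_cone}; that lemma provides a half-line whose Poincar\'{e} projection is chain transitive; and then Theorem \ref{Theorem_Poincare1}, direction (b)$\Rightarrow$(a), upgrades this to chain transitivity of $\overline{\pi_P\mathcal{V}_j^+}$ on the full closed upper hemisphere.

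First I would exploit that $\Sigma_{Mo}(\mathcal{V}_j^+)$ is a compact interval containing $0$ in its interior, as recalled in Section~\ref{Section3}. Choose $\delta_0>0$ such that $\pm 2\delta_0\in\Sigma_{Mo}(\mathcal{V}_j^+)$. By the definition of the Morse spectrum, together with the remark that it suffices to consider periodic chains and with Lemma~\ref{Proposition_Morse} (which identifies the Morse spectra on $\mathbb{S}\mathcal{V}$ and on $\mathbb{P}\mathcal{V}$), for every $\varepsilon,T>0$ there exist periodic chains in $_{\mathbb{S}}M_j$ with chain exponents arbitrarily close to $\pm 2\delta_0$. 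Concretely, pick $(\varepsilon^k,T^k)$-chains with $\varepsilon^k\to 0$, $T^k\to\infty$ and $\lambda(\zeta^k)\to\pm 2\delta_0$; for $k$ large one has $\varepsilon^k\le\varepsilon$, $T^k\ge T$, and the chain exponent is $>\delta_0$ respectively $<-\delta_0$. The chains can be taken on the sphere bundle (not only on the projective bundle) by the lifting argument from the proof of Lemma~\ref{Lemma_sphere}(i), which the proof of Lemma~\ref{Proposition_Morse} already shows preserves exponents.

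With $\delta_0$ so produced, Lemma~\ref{Proposition_cone} applies and delivers a point $v\in\,_{\mathbb{S}}M_j$ such that the half-line $l=\{\alpha v\mid\alpha>0\}\subset\mathcal{V}_j^+$ has chain transitive image $\pi_P l$ in $\mathbb{S}_P^+\mathcal{V}$. This verifies condition (b) of Theorem~\ref{Theorem_Poincare1}, so condition (a) of that theorem holds, which is precisely the conclusion that $\overline{\pi_P\mathcal{V}_j^+}$ is chain transitive on $\overline{\mathbb{S}_P^+\mathcal{V}}$.

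I do not anticipate a serious obstacle; the only subtle points are bookkeeping. One is the passage from ``$0\in\mathrm{int}\,\Sigma_{Mo}$'' (an \emph{approximate} spectral statement) to genuine periodic chains with exponents uniformly bounded away from zero for arbitrarily small $\varepsilon$ and large $T$; the interior hypothesis together with convergence of exponents $\lambda(\zeta^k)\to\pm 2\delta_0$ gives the required strict separation. The other is moving between chains on $\mathbb{P}\mathcal{V}$ and $\mathbb{S}\mathcal{V}$ when invoking Lemma~\ref{Proposition_Morse}, but this is handled by the sign-matching construction of Lemma~\ref{Lemma_sphere}(i), which preserves both the chain property and the chain exponent, and by doubling a non-closed spherical lift if necessary to recover periodicity.
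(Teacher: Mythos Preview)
Your proposal is correct and follows essentially the same route as the paper: extract $\delta_0>0$ from the hypothesis $0\in\mathrm{int}\,\Sigma_{Mo}(\mathcal{V}_j^+)$ using Lemma~\ref{Proposition_Morse} and the interval property of the Morse spectrum, feed this into Lemma~\ref{Proposition_cone} to obtain the chain transitive half-line, and conclude via Theorem~\ref{Theorem_Poincare1}. Your bookkeeping (the $2\delta_0$ margin and the doubling trick to restore periodicity of the spherical lift) is in fact slightly more explicit than the paper's own argument, which asserts the existence of the required periodic chains on $_{\mathbb{S}}M_j$ without spelling out these steps.
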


\begin{proof}
The Morse spectrum over a chain recurrent component in the projective bundle
is a bounded interval, cf. \cite[Proposition 6.2.14]{ColK00}. By Proposition
\ref{Proposition_Morse} the same is valid for the Morse spectrum $\Sigma
_{Mo}(_{\mathbb{S}}M_{i})$. If $0\in\mathrm{int}\left(  \Sigma_{Mo}%
(_{\mathbb{S}}M_{i})\right)  $, it follows that there is $\delta_{0}>0$ such
that for all $\varepsilon,T>0$ there are periodic $(\varepsilon,T)$-chains
$\zeta^{+},\zeta^{-}$ on $\mathrm{\,}_{\mathbb{S}}M_{i}$, with $\lambda
(\zeta^{+})>\delta_{0}$ and $\lambda(\zeta^{-})<-\delta_{0}$ for some
$\delta_{0}>0$. Thus Lemma \ref{Proposition_cone} implies that $\mathcal{V}%
_{j}^{+}$ contains a half-line $l=\{\alpha v\left\vert \alpha>0\right.  \}$
for some $v\in\,_{\mathbb{S}}M_{j}$ such that $\pi_{P}l$ is chain transitive
on $\mathbb{S}_{P}^{+}\mathcal{V}$. Now the assertion follows from Theorem
\ref{Theorem_Poincare1}.
\end{proof}

\section{Examples\label{Section5}}

In this section the results above are illustrated by autonomous linear
differential equations and by bilinear control systems.

\begin{example}
Consider
\[
\left(
\begin{array}
[c]{c}%
\dot{x}\\
\dot{y}%
\end{array}
\right)  =\left(
\begin{array}
[c]{cc}%
1 & 0\\
0 & 0
\end{array}
\right)  \left(
\begin{array}
[c]{c}%
x\\
y
\end{array}
\right)  .
\]
Here $_{\mathbb{S}}M^{\pm}=\{(0,\pm1)\}\subset\mathbb{S}^{1}$ are chain
recurrent components generating the cones $V^{\pm}=\{(0,\pm\alpha)\left\vert
\alpha>0\right.  \}$. The solutions in $\mathbb{R}^{2}$ in the eigenspace
$\{0\}\times\mathbb{R}$ for the eigenvalue $0$ are equilibria, hence $V^{\pm}$
form chain transitive sets in $\mathbb{R}^{2}$. The Morse spectrum is given by
$\Sigma_{Mo}(V^{\pm})=\{0\}$ and on the closed upper hemisphere $\overline
{\mathbb{S}^{2,+}}$ one obtains the chain transitive sets%
\begin{align*}
\overline{\pi_{P}V^{+}}  &  =\{(0,s_{2},s_{3})\in\mathbb{S}^{2}\left\vert
s_{2}\geq0,s_{3}\geq0\right.  \},\\
\pi_{P}V^{-}  &  =\{(0,s_{2},s_{3})\in\mathbb{S}^{2}\left\vert s_{2}%
\leq0,s_{3}\geq0\right.  \}.
\end{align*}
The intersection with the equator $\mathbb{S}^{2,0}=\{(s_{1},s_{2},s_{3}%
)\in\mathbb{S}^{2}\left\vert s_{3}=0\right.  \}$ is given by $\overline
{\pi_{P}V^{\pm}}\cap$ $\mathbb{S}^{2,0}=(0,\pm1,0)=e((0,\pm1))$.

For the system%
\[
\left(
\begin{array}
[c]{c}%
\dot{x}\\
\dot{y}%
\end{array}
\right)  =\left(
\begin{array}
[c]{cc}%
0 & 1\\
0 & 0
\end{array}
\right)  \left(
\begin{array}
[c]{c}%
x\\
y
\end{array}
\right)  .
\]
the unit sphere $S^{1}$ is a chain recurrent component. The solutions in
$\mathbb{R}^{2}$ are unbounded, hence the system on $\mathbb{R}^{2}$ has no
recurrence properties. The compactification provided by the Poincar\'{e}
sphere yields the chain transitive set%
\[
\overline{\pi_{P}(\mathbb{R}^{2})}=\left\{  (s_{1},s_{2},s_{3})\in
\mathbb{S}^{2}\left\vert s_{3}\geq0\right.  \right\}  .
\]

\end{example}

For general autonomous linear differential equations the assumption in Theorem
\ref{Theorem_Poincare2} that $0$\textrm{\ }is in the interior of the Morse
spectrum is never satisfied, since the Morse spectrum reduces to the real
parts of the eigenvalues. A direct application of Theorem
\ref{Theorem_Poincare1} yields the following result.

\begin{corollary}
\label{Corollary1}For an autonomous linear differential equation given by
$\dot{x}=Ax$ with $A\in\mathbb{R}^{d\times d}$, suppose that the subspace%
\[
V:=%
{\displaystyle\bigoplus}
\{E(\mu)\left\vert \mu\in\mathrm{spec}(A):\operatorname{Re}\mu=0\right.  \},
\]
is nontrivial; here $E(\mu)$ is the (real) generalized eigenspace for the
eigenvalue $\mu$.

Then the projection of $V$ to projective space $\mathbb{P}^{d-1}$ is a chain
recurrent component $_{\mathbb{P}}M$ and there are one or two cones $V^{+}$
and $V^{-}$ such that $V^{\pm}\cap\mathbb{S}^{d-1}$are chain recurrent
components on the unit sphere $\mathbb{S}^{d-1}$ which project onto
$_{\mathbb{P}}M$. The closure of the projection $\pi_{P}V^{\pm}$ is chain
transitive for the induced flow on the closed upper hemisphere $\overline
{\mathbb{S}_{P}^{+}\mathbb{R}^{d}}=\overline{\mathbb{S}^{d,+}}$ of the
Poincar\'{e} sphere.
\end{corollary}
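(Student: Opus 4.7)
My plan is to verify in order: (i) $\mathbb{P}V$ equals one of the Selgrade components $\,_{\mathbb{P}}M$ on $\mathbb{P}^{d-1}$; (ii) its sphere preimage is the one or two cones $V^{\pm}\cap\mathbb{S}^{d-1}$ by Theorem \ref{Theorem_sphere}(ii); and (iii) in each $V^{\pm}$ a half-line $l$ exists whose Poincar\'{e} image is chain transitive, so that Theorem \ref{Theorem_Poincare1} delivers the conclusion. The first two steps are essentially bookkeeping: for $\dot{x}=Ax$ the base space is a point and Selgrade's theorem reduces to the classical Lyapunov decomposition into sums of real generalized eigenspaces grouped by $\operatorname{Re}\mu$, so $V$ is precisely the Lyapunov subspace with $\operatorname{Re}\mu=0$ and $\mathbb{P}V$ is one of the $\,_{\mathbb{P}}M_{i}$; Theorem \ref{Theorem_sphere}(ii) then produces $\,_{\mathbb{S}}M^{+}$ and possibly $\,_{\mathbb{S}}M^{-}=-\,_{\mathbb{S}}M^{+}$ inside $V^{\pm}\cap\mathbb{S}^{d-1}$.

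The nontrivial step is (iii). Because all real parts of eigenvalues on $V$ are $0$, the Morse spectrum $\Sigma_{Mo}(V^{\pm})$ collapses to $\{0\}$ and Theorem \ref{Theorem_Poincare2} cannot be invoked, so I construct the half-line by hand, splitting according to whether $0\in\mathrm{spec}(A|_{V})$. In the first case pick a unit-norm real eigenvector $v_{0}\in\ker A\cap V$: then $\Phi(t,\alpha v_{0})=\alpha v_{0}$, so every point of $l=\{\alpha v_{0}\mid\alpha>0\}$ is an equilibrium of $\Phi$ and $\pi_{P}(l)$ is a continuous arc of equilibria of $\pi_{P}\Phi$ inside $\mathbb{S}_{P}^{+}\mathbb{R}^{d}$; any sufficiently fine $\varepsilon$-subdivision of this arc is an $(\varepsilon,T)$-chain between any two of its points, so $\pi_{P}(l)$ is chain transitive.

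In the second case every eigenvalue on $V$ is purely imaginary and nonzero; I take a pair $\pm i\omega$ with complex eigenvector $\xi=u+iv$ and set $E:=\mathrm{span}_{\mathbb{R}}\{u,v\}\subset V$, a two-dimensional $A$-invariant plane on which $A$ acts as the rotation generator of angular frequency $|\omega|$; every orbit in $E$ is $\tau$-periodic with $\tau=2\pi/|\omega|$ and preserves norms. Picking $v_{0}\in E\cap\,_{\mathbb{S}}M^{+}$ and $l=\{\alpha v_{0}\mid\alpha>0\}$, for given $\alpha_{0},\alpha_{1}>0$ and $\varepsilon,T>0$, uniform continuity of $\alpha\mapsto\pi_{P}(\alpha v_{0})$ on a compact interval containing $\alpha_{0}$ and $\alpha_{1}$ produces an interpolation $\beta_{0}=\alpha_{0},\beta_{1},\ldots,\beta_{N}=\alpha_{1}$ with consecutive Poincar\'{e} distances $<\varepsilon$; choosing $k\in\mathbb{N}$ with $k\tau\geq T$ and the common time step $T_{i}=k\tau$, periodicity gives $\pi_{P}\Phi(k\tau,\beta_{i}v_{0})=\pi_{P}(\beta_{i}v_{0})$, so $\bigl(\pi_{P}(\beta_{i}v_{0})\bigr)_{i=0}^{N}$ is the desired $(\varepsilon,T)$-chain on $\mathbb{S}_{P}^{+}\mathbb{R}^{d}$. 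This rotation-plane-plus-radial-subdivision argument is the principal technical point I expect the proof to turn on. Once (iii) is established, Theorem \ref{Theorem_Poincare1} gives chain transitivity of $\overline{\pi_{P}V^{+}}$, and the $V^{-}$ case follows symmetrically by replacing $v_{0}$ with $-v_{0}$.
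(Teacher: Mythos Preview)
Your proof is correct and follows the same route as the paper: reduce to the half-line criterion of Theorem~\ref{Theorem_Poincare1} and produce the half-line from the fact that the (real) eigenspace of an eigenvalue $\mu$ with $\operatorname{Re}\mu=0$ carries a continuum of periodic solutions. The paper's proof is terser---it handles your two cases in a single sentence by noting that equilibria are trivially periodic, so any eigenspace $E(\mu)$ with $\operatorname{Re}\mu=0$ consists of periodic solutions and hence contains a chain transitive half-line---but your explicit case split and the radial-subdivision construction of the $(\varepsilon,T)$-chain are exactly what underlies that sentence.
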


\begin{proof}
It is well known that the projection of $V$ is a chain recurrent component
$_{\mathbb{P}}M$ in projective space $\mathbb{P}^{d-1}$ (cf., e.g., Colonius
and Kliemann \cite[Theorem 4.1.3]{ColK14} for a proof). Then it easily follows
that there are one or two chain recurrent components on the unit sphere which
project onto $_{\mathbb{P}}M$. The last assertion of the corollary follows
from Theorem \ref{Theorem_Poincare1}, since for an eigenvalue $\mu$ with
$\operatorname{Re}\mu=0$ the (real) eigenspace $E(\mu)$ consists of a
continuum of periodic solutions and hence contains a chain transitive
half-line. Thus also the projection to $\mathbb{S}_{P}^{+}\mathbb{R}^{d}$ is
chain transitive.
\end{proof}

Another class of linear flows is given by homogeneous bilinear control systems
of the form
\begin{equation}
\dot{x}(t)=A_{0}x(t)+\sum_{i=1}^{m}u_{i}(t)A_{i}x(t),\quad u(t)\in\Omega,
\label{control}%
\end{equation}
where $A_{0},A_{1},\ldots,A_{m}\in\mathbb{R}^{d\times d}$. The control
functions $u=(u_{1},\ldots,u_{m})$ have values in a convex and compact subset
$\Omega\subset\mathbb{R}^{m}$ and the set of admissible controls is
$\mathcal{U}=\{u\in L^{\infty}(\mathbb{R},\mathbb{R}^{m})\left\vert
u(t)\in\Omega\text{ for almost all }t\right.  \}$. Denote the solutions of
(\ref{control}) with initial condition $x(0)=x$ by $\varphi(t,x,u),t\in
\mathbb{R}$.

A control system of this form defines a linear flow, the control flow, on the
vector bundle $\mathcal{V}=\mathcal{U}\times\mathbb{R}^{d}$ given by
$\Phi:\mathbb{R}\times\mathcal{U}\times\mathbb{R}^{d}\rightarrow
\mathcal{U}\times\mathbb{R}^{d},\,(t,u,x)\mapsto(u(t+\cdot),\varphi(t,x,u))$
where $u(t+\cdot)(s):=u(t+s),s\in\mathbb{R}$, is the right shift and
$\mathcal{U}\subset L^{\infty}(\mathbb{R},\mathbb{R}^{m})$ is considered in a
metric for the weak$^{\ast}$ topology. Then $\mathcal{U}$ is compact and chain
transitive; cf. Colonius and Kliemann \cite[Chapter 4]{ColK00} or Kawan
\cite[Section 1.4]{Kawa}.

The following two-dimensional example is a minor modification of \cite[Example
5.5.12]{ColK00}. Here four chain recurrent components on the sphere bundle are
present, and the corresponding cone bundles contain $0$ in the interior of the
Morse spectrum.

\begin{example}
\label{MORSE:ex47}Consider the control system
\[
\left[
\begin{array}
[c]{c}%
\dot{x}_{1}\\
\dot{x}_{2}%
\end{array}
\right]  =\left[  \left(
\begin{array}
[c]{cc}%
0 & -\frac{1}{4}\\
-\frac{1}{4} & 0
\end{array}
\right)  +u_{1}\left(
\begin{array}
[c]{cc}%
1 & 0\\
0 & 1
\end{array}
\right)  +u_{2}\left(
\begin{array}
[c]{cc}%
0 & 1\\
0 & 0
\end{array}
\right)  +u_{3}\left(
\begin{array}
[c]{cc}%
0 & 0\\
1 & 0
\end{array}
\right)  \right]  \left[
\begin{array}
[c]{c}%
x_{1}\\
x_{2}%
\end{array}
\right]
\]
with
\[
u(t)=(u_{1}(t),u_{2}(t),u_{3}(t))\in\Omega=\left[  -1,1\right]  \times\left[
-1/4,1/4\right]  \times\left[  -1/4,1/4\right]  \subset\mathbb{R}^{3}\text{.}%
\]
This defines a linear flow $\Phi$ on the vector bundle $\mathcal{U}%
\times\mathbb{R}^{2}$ given by $\Phi(t,u,x)=\left(  u(t+\cdot),\varphi
(t,x,u)\right)  $. With%
\begin{align*}
A_{1}  &  :=\left\{  \left(  x_{1},x_{2}\right)  \in\mathbb{R}^{2}\left\vert
x_{2}=\alpha x_{1},\alpha\in\lbrack-\sqrt{2},-1/\sqrt{2}]\right.  \right\}
,\\
A_{2}  &  :=\left\{  \left(  x_{1},x_{2}\right)  \in\mathbb{R}^{2}\left\vert
x_{2}=\alpha x_{1},\alpha\in\lbrack1/\sqrt{2},\sqrt{2}]\right.  \right\}
\end{align*}
the two chain recurrent components of $\mathbb{P}\Phi$ on the projective
bundle $\mathcal{U}\times\mathbb{P}^{1}$ are%
\[
_{\mathbb{P}}M_{i}=\left\{  (u,p)\in\mathcal{U}\times\mathbb{P}^{1}\left\vert
\mathbb{P}\varphi(t,p,u)\in\pi_{\mathbb{P}}A_{i}\text{ for }t\in
\mathbb{R}\right.  \right\}  ,\,i=1,2.
\]
With%
\[
A_{i}^{+}=A_{i}\cap\left(  (0,\infty)\times\mathbb{R}\right)  ,A_{i}^{-}%
=A_{i}\cap\left(  (-\infty,0)\times\mathbb{R}\right)  ,
\]
the four chain recurrent components for $\mathbb{S}\Phi$ on the sphere bundle
$\mathbb{S}(\mathcal{U}\times\mathbb{R}^{2})=\mathcal{U}\times\mathbb{S}^{1}$
are given by%
\[
_{\mathbb{S}}M_{i}^{\pm}=\left\{  (u,s)\in\mathcal{U}\times\mathbb{S}%
^{1}\left\vert \mathbb{S}\varphi(t,s,u)\in\pi_{\mathbb{S}}A_{i}^{\pm}\text{
for }t\in\mathbb{R}\right.  \right\}  ,\,i=1,2.
\]
One computes the Morse spectra of the generated cone bundles $\mathcal{V}%
_{i}^{\pm}$ as (here they are determined by the eigenvalues for constant
controls)%
\[
\mathbf{\Sigma}_{Mo}(\mathcal{V}_{1}^{\pm})=\left[  -2,1/2\right]  \text{ and
}\mathbf{\Sigma}_{Mo}(\mathcal{V}_{2}^{\pm})=\left[  -1/2,2\right]  .
\]
Hence for each of them the Morse spectrum contains $0$ in the interior. By
Theorem \ref{Theorem_Poincare2} the projections $\pi_{P}\mathcal{V}_{i}^{\pm}$
are chain transitive on the upper hemisphere $\mathbb{S}_{P}^{+}%
(\mathcal{U}\times\mathbb{R}^{2})=\mathcal{U}\times\mathbb{S}^{2,+}$ of the
Poincar\'{e} sphere bundle.
\end{example}

\textbf{Acknowledgement.} I am indebted to Alexandre Santana for helpful
discussions concerning the Poincar\'{e} sphere and to Mauro Patr\~{a}o for a
hint to Kronecker's theorem.


\begin{thebibliography}{99}                                                                                               %


\bibitem {AlonN07}\textsc{J.M. Alongi and G.S. Nelson,} Recurrence and
Topology, Graduate Studies in Math. Vol. 85, Amer. Math. Soc. 2007.

\bibitem {AlvSM16}\textsc{L.A. Alves and L.A.B. San Martin,} \emph{Conditions
for equality between Lyapunov and Morse decompositions,} Ergod. Th. \& Dynam.
Sys., 36(4) (2016), pp. 1007-1036.

\bibitem {Bochi18}\textsc{J. Bochi,} \emph{Ergodic optimization of Birkhoff
averages and Lyapunov exponents}, Proc. Int. Cong. of Math. - 2018, Rio de
Janeiro, Vol. 3, pp. 1843-1864.

\bibitem {BraSM07}\textsc{C.J. Braga Barros and L.A.B. San Martin},
\emph{Chain transitive sets for flows on flag bundles}, Forum Math. 19 (2007),
pp. 19--60.

\bibitem {BroK94}\textsc{I.U. Bronstein and A.Ya. Kopanskii,} \emph{Smooth
Invariant Manifolds and Normal Forms}, World Scientific 1994.

\bibitem {BluL19}\textsc{A. Blumenthal and Yu. Latushkin, }\emph{The Selgrade
decomposition for linear semiflows on Banach spaces,} J. Dyn. Diff. Equations,
31(3) (2019), pp. 1427-1456.

\bibitem {Cass57}\textsc{J.W.S. Cassels,} \emph{An Introduction to Diophantine
Approximation,} Cambridge University Press, 1957.

\bibitem {ColK96}\textsc{F. Colonius and W. Kliemann,} \emph{The Morse
spectrum of linear flows on vector bundles,} Trans. Amer. Math. Soc., 348
(1996), pp. 4355-4388.

\bibitem {ColK00}\textsc{F. Colonius and W. Kliemann,}\emph{\ The Dynamics of
Control}, Birkh\"{a}user 2000.

\bibitem {ColK14}\textsc{F. Colonius and W. Kliemann,}\emph{\ Dynamical
Systems and Linear Algebra, }Graduate Studies in Math. Vol. 156, Amer. Math.
Soc. 2014.

\bibitem {ColRS}\textsc{F. Colonius, A.J. Santana and J. Setti,} \emph{Control
sets for bilinear and affine systems,} Mathematics of Control, Signals and
Systems, 34 (2022), pp. 1-35.

\bibitem {Elliott}\textsc{D.L. Elliott,} \emph{Bilinear Control Systems,
Matrices in Action}, Kluwer Academic Publishers, 2008.

\bibitem {Grue00}\textsc{L. Gr\"{u}ne,} \emph{A uniform exponential spectrum
for linear flows on vector bundles,} J. Dyn. Diff. Equations, 12 (2000), pp. 435-448.

\bibitem {JoPS87}\textsc{R.A. Johnson, K.J. Palmer and G.R. Sell,}
\emph{Ergodic properties of linear dynamical systems,} SIAM J. Math. Anal., 18
(1987), pp. 1-33.

\bibitem {Jurd97}\textsc{V. Jurdjevic}, \emph{Geometric Control Theory},
Cambridge University Press, 1997.

\bibitem {Kawa}\textsc{C. Kawan,}\emph{ Invariance Entropy for Deterministic
Control Systems}, Lect. Notes Math. Vol 2089, Springer 2013.

\bibitem {KawS12}\textsc{C. Kawan and T. Stender,} \emph{Growth rates for
semiflows an Hausdorff spaces,} J. Dyn. Diff. Equations 24(2) (2012), pp. 369-390.

\bibitem {Kron84}\textsc{L. Kronecker,} \emph{N\"{a}herungsweise ganzzahlige
Aufl\"{o}sung linearer Gleichungen}, Berl. Ber. (1884), pp. 1179--1193,
1271--1299. in: Leopold Kronecker's Werke, Dritter Band, B.G. Teubner, Berlin 1899.

\bibitem {Perko}\textsc{L. Perko,} \emph{Differential Equations and Dynamical
Systems,} Springer, 3rd ed., 2001.

\bibitem {Poin}\textsc{H. Poincare,} \emph{Memoire sur les courbes definies
par une equation differentielle, }J. Mathematiques, 7 (1881), pp. 375-422;
\emph{Ouevre Tome I}, Gauthier-Villar, Paris 1928, pp. 3-84.

\bibitem {Robin98}\textsc{C. Robinson,} \emph{Dynamical Systems:\ Stability,
Symbolic Dynamics, and Chaos,} Taylor \& Francis Inc., 2nd ed., 1998.

\bibitem {SacS}\textsc{R.J. Sacker and G.R. Sell,} \emph{A spectral theory for
linear differential equations,} J. Diff. Equations, 27 (1978), pp. 320-358.

\bibitem {SalZ88}\textsc{D. Salamon and E. Zehnder,} \emph{Flows on vector
bundles and hyperbolic sets, }Trans. Amer. Math. Soc. 306 (1988), pp. 623-649.

\bibitem {SanS10}\textsc{L.A.B. San Martin and L. Seco}, \emph{Morse and
Lyapunov spectra and dynamics on flag bundles}, Ergod. Th. \& Dynam. Sys.,
30(3) (2010), pp. 893-922.

\bibitem {Selg75}\textsc{J. Selgrade,} \emph{Isolated invariant sets for flows
on vector bundles, }Trans. Amer. Math. Soc., Vol. 203 (1975), pp. 259-390.

\bibitem {Tcheb}\textsc{P.L. Tchebychef,}\emph{\ Sur une question
arithm\'{e}tique}, (1866)\emph{ }in: \emph{Oeuvres Tome I,} Imprimerie de
l'Academie Imp\'{e}riale des Sciences, St. Petersburg, 1899, pp. 639-684.
\end{thebibliography}
\end{document}